\documentclass[reqno]{amsart}
\usepackage{epsfig}
\usepackage{color}
\usepackage{amssymb,amsmath,amsthm,amstext,amsfonts}
\usepackage{psfrag}
\usepackage{url}
\usepackage{epstopdf}
\usepackage{epic,eepic}
\usepackage{upgreek}
\usepackage{amssymb}
\usepackage{mathrsfs}

\usepackage[colorlinks=true, linkcolor=blue, urlcolor=red, citecolor=blue]{hyperref}

\pagestyle{plain} \pagenumbering{arabic}

\makeatletter \@addtoreset{equation}{section} \makeatother

\renewcommand\thetable{\thesection.\@arabic\c@table}

\theoremstyle{plain}
\newtheorem{maintheorem}{Theorem}
\newtheorem{maincorollary}{Corollary}
\newtheorem{mainproposition}{Proposition}

\newtheorem{proposition}{Proposition}[section]
\newtheorem{lemma}{Lemma}[section]

\newtheorem{remark}{Remark}[section]
\newtheorem{example}{Example}[section]

\newcommand{\htop}{h_{\topp}}

\newcommand{\Per}{\text{Per}}

\newcommand{\vep}{\varepsilon}

\newcommand{\diam}{\operatorname{diam}}
\newcommand{\dist}{\operatorname{dist}}

\newcommand{\topp}{\operatorname{top}}

\newcommand{\Ptop}{P_{\topp}}

\newcommand{\cP}{\mathcal{P}}

\newcommand{\cE}{\mathcal{E}}
\newcommand{\cM}{\mathcal{M}}

\newcommand{\cU}{\mathcal{U}}

\newcommand{\cA}{\mathcal{A}}

\newcommand{\Homeo}{\text{Homeo}(M)}

\newcounter{main}

\title[The centralizer of $C^r$-generic diffeomorphisms at hyperbolic basic sets is trivial]
{The centralizer of $C^r$-generic diffeomorphisms at hyperbolic basic sets is trivial}

\author[Jorge Rocha]{Jorge Rocha}
\address{Jorge Rocha, Departamento de Matem\'atica, Universidade do Porto, 
Rua do Campo Alegre, 687, 
4169-007 Porto, Portugal}
\email{jrocha@fc.up.pt}

\author[Paulo Varandas]{Paulo Varandas}
\address{Paulo Varandas, Departamento de Matem\'atica, Universidade Federal da Bahia\\
Av. Ademar de Barros s/n, 40170-110 Salvador, Brazil}
\email{paulo.varandas@ufba.br}

\begin{document}

\begin{abstract}
In the late nineties, Smale proposed a list of problems for the next century and, among these, it was conjectured 
that for every $r\ge 1$ a $C^r$-generic diffeomorphism has trivial centralizer. Our contribution here is to prove the
triviality of $C^r$-centralizers on hyperbolic basic sets. In particular, $C^r$-generic 
transitive Anosov diffeomorphisms have a trivial $C^1$-centralizer. 
These results follow from a more general criterium for expansive homeomorphisms with the gluing orbit property.
We also construct a linear Anosov diffeomorphism 
on $\mathbb T^3$ with discrete, non-trivial centralizer and with elements that are not roots. 
Finally, we prove that all elements in the centralizer of an Anosov diffeomorphism preserve some of its
maximal entropy measures, and use this to characterize the centralizer of linear Anosov diffeomorphisms on tori.

\end{abstract}

\keywords{Centralizers, Anosov diffeomorphisms, uniform hyperbolicity, generic properties}
 \footnotetext{2000 {\it Mathematics Subject classification}:
Primary 37D20, 
37C20, 
37C15, 
Secondary , 
37F15,  
37C05. 
} 
\date{\today}
\maketitle

\section{Introduction and statement of the main results}

\subsection*{Introduction}

In the late nineties Smale proposed a list of problems for the $21^{st}$ century and, among them, 
it is asked how typical are diffeomorphisms with trivial centralizer \cite{Sm}. 
The centralizer $\mathcal Z^r(f)$ 
of a $C^r$-diffeomorphism $f$, defined as the set of $C^r$-diffeomorphisms that commute with $f$, 
contains many information about possible symmetries of the dynamics. For instance, it may be used to 
determine when a diffeomorphism embeds as a time-$1$ map of a flow as done by Palis~\cite{Palis},
or to study the existence of smooth conjugacies for topologically conjugate circle diffeomorphisms,
a problem initiated by Herman in~\cite{Herman}. Thus, the problems and conjectures
established by Smale have a strong motivation from the interplay between dynamical systems and the algebraic 
properties of the linear models for hyperbolic automorphisms.
Important contributions to the conjecture have been given since the seventies. 
In \cite{Wa70}, Walters proved that the $C^0$-centralizer of expansive dynamics is discrete. 
Anderson~\cite{And} proved that for a $C^\infty$-open and dense set of Morse-Smale diffeomorphisms the centralizer
is also discrete.
Kopell \cite{Ko70} studied the triviality of the scentralizer of $C^r$ circle diffeomorphisms ($r\ge 2$) and 
linear transformations.
Palis and Yoccoz proved Smale's conjecture for Axiom A diffeomorphisms with the strong transversality
condition in the $C^\infty$ topology \cite{PY89,PY89b}.
In the real analytic setting, the first author proved that diffeomorphisms with trivial centralizer contain
a residual subset of $C^\omega$ Axiom A diffeomorphisms with the strong transversality condition \cite{Ro93}.
More recently, Bonatti, Crovisier and Wilkinson~\cite{BCW} 
proved that $C^1$-generic diffeomorphisms 
have trivial centralizer, providing  a solution of Smale's conjecture on the space of $C^1$-diffeomorphisms.
In that paper,  the authors introduced a notion of unbounded distortion, and show it is $C^1$-generic, 
and use it to prove that $C^1$-generic diffeomorphisms have trivial centralizer.
We also refer to \cite{JH91,BoRoVa} and references therein for some partial answers to Smale's conjecture in the context 
of flows hyperbolic and singular-hyperbolic flows.

The problem of the triviality of the centralizer in more regular topologies is still much an open question.
A classical strategy for obtaining discreteness for $C^r$-diffeomorphisms ($r>1$) is to use linearization 
at some hyperbolic periodic orbit and to reduce the problem of the centralizer back to the analysis of 
algebraic ingredients. The drawback of this strategy is that $C^r$-linearization is only guaranteed once
non-resonance conditions are satisfied by the eigenvalues of the derivative at that periodic point, and 
provided the diffeomorphism is sufficiently  smooth (the required  smoothness is given explicitly from the 
non-resonance conditions of the eigenvalues
and, for that reason, $C^{\infty}$-smoothness is assumed).
Indeed, there exists a $C^\infty$ open and dense subset of Anosov diffeomorphisms on tori and a $C^\infty$ open and dense subset of Axiom A diffeomorphisms with strong transversality which have trivial centralizer (see \cite{PY89b, PY89} respectively). 
The previous results were extended by the first author 
in the case of surfaces and $2\le r\le \infty$, there exists a $C^r$-open and dense subset of 
Axiom A diffeomorphisms with the no cycles condition whose elements have trivial centralizer~\cite{Fi08}.
Later, Fisher \cite{Fi09} proved that the elements of a $C^r$ ($2\le r\le \infty$) open and dense set of diffeomorphisms 
that exhibit a codimension-one hyperbolic and non-Anosov attractor have trivial centralizer on its basin of attraction.
Hence, the triviality of the centralizer at hyperbolic basic pieces is well known both in the case of $C^\infty$ diffeomorphisms and in the case of codimension one hyperbolic attractors.

Our main purpose in this paper is to contribute to description of the centralizer of $C^r$-diffeomorphisms at 
hyperbolic basic pieces ($r\ge 1$) removing both the $C^\infty$-smoothness and codimension one assumptions, 
establishing the triviality of the centralizer in the case of $C^r$-generic diffeomorphisms that
exhibit non-trivial hyperbolic basic pieces ($2\le r <\infty$). 
First, 
we prove a criterium for an element in the 
$C^0$-centralizer of expansive homeomorphisms with the periodic gluing orbit property to be a power of it (see Theorem~\ref{thm:PGT} below). Then, we prove that the assumptions in the criterium are satisfied by $C^1$ diffeomorphisms in the centralizer of $C^r$-generic diffeomorphisms and, consequently, 
$C^r$-generic hyperbolic basic pieces (including transitive Anosov diffeomorphisms) have trivial centralizer (cf. 
Corollaries~\ref{cor:0} and ~\ref{cor:A}).
As  Anosov diffeomorphisms may have non-trivial centralizers and may have positive entropy (even Anosov) diffeomorphisms in their centralizer (see Example~\ref{thm:C}) then it is important to provide a general
characterization for elements in the centralizer of Anosov diffeomorphisms. 
For that purpose, we prove in Theorem~\ref{thm:measures} that if the measure theoretical entropy function 
of a finite entropy homeomorphism is lower semi-continuous then all elements of the centralizer preserve some 
of its measures of maximal entropy. In consequence, the centralizer of Anosov automorphisms on tori 
is formed by volume preserving diffeomorphisms (Corollary~\ref{cor:Zcons}) and any $C^2$-partially hyperbolic 
diffeomorphism in its centralizer has positive entropy (cf. Corollary~\ref{cor:phm}).
Finally, we finish this article with some questions.
\subsection*{Preliminaries}

Let $f \in \text{Diff}^{\, r}(M)$, $r\ge 1$.
Let ${\text{Per}(f)}$ denote the set of periodic points for $f$ and $\Omega(f) \subset M$ denote the non-wandering set of $f$. An $f$-invariant set $\Lambda$ is \emph{transitive} if it has a dense orbit, that is, there exists
$x\in \Lambda$ so that $\overline{\mathcal O_f (x)} := \overline{ \{  f^n(x) : n\in \mathbb Z \} } = \Lambda$.
We say that a compact $f$-invariant set $\Lambda \subset M$ is a \emph{hyperbolic basic set} for $f$ if it is transitive
and admits a hyperbolic splitting: there is a $Df$-invariant splitting 
$T_{\Lambda} M = E^s \oplus E^u$ and constants $C>0$ and $\lambda\in (0,1)$ so that 
$
\| Df^n(x)\mid_{E^s_x} \| \le C \lambda^n 
	\quad\text{and}\quad
	\| (Df^n(x)\mid_{E^u_x})^{-1} \| \le C \lambda^n 
$
for every $x\in \Lambda$ and $n\ge 1$. 
A periodic point $p\in \text{Per}(f)$ is hyperbolic if its orbit is a hyperbolic set.

We say that $f \in \text{Diff}^{\, r}(M)$, $r\ge 1$, is \emph{Axiom A} if (i) $\overline{\text{Per}(f)}=\Omega(f)$
and (ii) $\Omega(f)$ is a uniformly hyperbolic set.
Clearly all periodic points of Axiom A diffeomorphisms are hyperbolic. We say that $f$ is an \emph{Anosov difffeomorphism} 
if the whole manifold $M$ is a hyperbolic set for $f$. 
One should 
mention that not all manifolds admit Anosov diffeomorphisms. 
It follows from the spectral decomposition theorem 
that for any Axiom A diffeomorphism $f$ there are finitely many pairwise disjoint 
hyperbolic basic sets $(\Lambda_i)_{i=1\dots k}$ so that 
$\Omega(f) = \Lambda_1 \cup \Lambda_2 \cup \dots \cup \Lambda_k$. 
We refer the reader e.g. to \cite{Shub} for more details. 

Given $f \in \text{Diff}^{\, r}(M)$, $r\in \mathbb N \cup\{\infty\}$ and $0\le k \le r$, the \emph{$C^k$-centralizer} for $f$ is the subgroup of $\text{Diff}^k(M)$ defined as 
$$
Z^k(f) = \{ g\in \text{Diff}^k(M) \colon g\circ f = f\circ g \},
$$
where $\text{Diff}^0(M)$ stands for the space $\Homeo$ of homeomorphisms on $M$. 
For every $1\le k \le n$, it is clear that $Z^k(f)$ is a subgroup of $(\text{Diff}^k(M), \circ)$
which always contains the subgroup $\{ f^n \colon n\in\mathbb Z \}$.
Clearly, 
$
\mathcal Z^0(f) \supset \mathcal  Z^1(f) \supset \mathcal  Z^2(f) \supset \dots \supset \mathcal  Z^r(f) \supset \{ f^n \colon n\in\mathbb Z \},
$
and we say that $f \in \text{Diff}^{\, r}(M)$ has \emph{trivial $C^k$-centralizer} if 
$Z^k(f)=\{ f^n \colon n\in\mathbb Z \}.$
For simplicity, will say that $f \in \text{Diff}^{\, r}(M)$ has \emph{trivial centralizer} if its $C^r$-centralizer is trivial. 
Note the $C^0$-triviality of the $C^0$-centralizer of $f\in \text{Diff}^{\,r}(M)$, $r\ge 1$, implies on the triviality
of $\mathcal Z^k(f)$ for all $1\le k \le r$.

We now recall some ingredients from the thermodynamic formalism. Let $\cM(M)$ denote the space of Borelian
probability measures on $M$, endowed with the weak$^*$ topology. The space of $f$-invariant probability measures 
$\cM_f(M)\subset \cM(M)$ is a compact subset, and for any $\nu\in \cM_f(M)$ the \emph{entropy of $\nu$} is defined as 
$
h_\nu(f)=\sup \{h_\nu(f,\cP) \colon \cP \,\text{is a finite partition in}\, M\},
$ 
where 
$$
h_\nu(f,\cP)=\inf_{n\ge 1} \frac1n H(\bigvee_{j=0}^{n-1} f^{-j}(\cP))
\quad\text{and}\quad
H(\cP)=\sum_{P\in \cP} -\mu(P) \log \mu(P).
$$
An $f$-invariant probability measure $\mu$ is called an 
\emph{equilibrium state} for $f$ with respect to a continuous potential $\phi: M \to \mathbb R$
if it attains the supremum in the variational principle for the topological pressure 
\begin{equation}\label{eq:VP}
\Ptop(f,\phi)=\sup_{\nu\in \cM_f(M)} \Big\{h_\nu(f) +\int \varphi\, d\nu\Big\}
\end{equation}
In the case that $\phi\equiv 0$ the previous expression becomes the variational principle for the topological 
entropy and any measure that attains the supremum is called a \emph{maximal entropy measure}.
In the case that $\Lambda$ is a hyperbolic basic set for $f$ it is well known that for every H\"older continuous
potential $\phi$ there exists a unique equilibrium state for $f\mid_\Lambda$ with respect to $\phi$.
We refer the reader to Bowen's monograph~\cite{Bo75} for more details.

\subsection*{Statement of the main results}\label{sec:statements}

This section is devoted to the statement of our main results. Our first main result is inspired by ~\cite{JH91} and 
provides a criterium for the triviality of the $C^0$-centralizer for expansive homeomorphisms on compact metric spaces. 
We need two preliminary notions that we now recall. Recall that an homeomorphism $f$ is \emph{expansive} if  
there exists $\vep>0$ so that for any $x,y\in M$ there exists $n\in\mathbb Z$ such that $d(f^n(x), f^n(y))>\vep$.
Any such constant $\vep$ is called an expansivity constant for $f$.

Given a compact metric space $\Lambda$ and an homeomorphism $f \in \text{Homeo}(\Lambda)$, 
we say that $f$ has the {\em periodic gluing orbit property} if for any $\vep>0$ there exists $K=K(\vep) >0$ such that
for any points $x_0,x_1, \dots, x_k \in \Lambda$ and positive integers $n_0, n_1, \dots, n_k \ge 1$
there are positive integers $p_0,p_1, \dots, p_{k-1}, p_k  \le K(\vep)$ and $x\in \Lambda$
so that $d( f^k(x), f^k(x_0) ) <\vep$ for all $ 1\le k \le n_0$, that
 $d( f^{k+\sum_{j=0}^{i-1} (p_j+ n_j)} (x) , f^k(x_i) ) <\vep$ for all $1\le k \le n_i$ and $1 \le i \le k$, and 
$f^{\sum_{j=0}^{k} (p_j+ n_j)} (x)=x$.
This notion is weaker than specification
and holds e.g. for minimal isometries on tori and uniformly hyperbolic dynamics. 
In fact, the periodic gluing orbit property implies on a strong transitivity and on the denseness of
periodic orbits. Moreover, this property holds for homeomorphisms with the periodic shadowing 
property on each chain recurrence class of the non-wandering set. 
We refer the reader to \cite{BTV,BoVa, BoToVa,ST} for more details. 

Finally, given homeomorphisms $f, h: M \to M$, we say that $h$ \emph{preserves the periodic orbits of $f$}
if $h(\mathcal O_f(x))=\mathcal O_f(x)$ for every $x\in \Per(f)$. In other words, for each $x\in \Per(f)$ there exists 
$n(x)\in \mathbb Z$ so that $h(x)=f^{n(x)}(x)$. We are now in a position to state our first main
result.

\begin{maintheorem}\label{thm:PGT}
Let $\Lambda$ be a compact metric space and let $f : \Lambda \to \Lambda$ be an expansive homeomorphism. 
Assume 
that $f$ satisfies the periodic gluing orbit property. 
If $h\in \mathcal Z^0(f)$ preserves periodic orbits of $f$ then $h=f^k$ for some $k\in \mathbb Z$.
\end{maintheorem}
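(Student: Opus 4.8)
The plan is to reduce the statement to an approximation property on periodic orbits, and then to use the periodic gluing orbit property to transport a single ``rotation number'' across all of them. Let $\vep>0$ be an expansivity constant for $f$ and let $\omega$ be a modulus of continuity for $h$ on the compact space $\Lambda$. I claim it is enough to produce $k\in\mathbb Z$ such that $d(f^k(p),h(p))\le\vep$ for every $p\in\Per(f)$. Indeed, $x\mapsto d(f^k(x),h(x))$ is continuous, so the set where it is $\le\vep$ is closed; since the periodic gluing orbit property forces $\Per(f)$ to be dense, this set is all of $\Lambda$. Then $g:=f^{-k}\circ h\in\mathcal Z^0(f)$ moves every point by at most $\vep$, and as $g$ commutes with $f$ we get $d(f^n(x),f^n(g(x)))=d(f^n(x),g(f^n(x)))\le\vep$ for all $n\in\mathbb Z$, whence expansivity gives $g=\mathrm{id}$, i.e.\ $h=f^k$. (If $\Lambda$ is finite it is a single periodic orbit and the conclusion is immediate, so I assume $\Lambda$ infinite.)

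Since $h$ commutes with $f$ and $h(\mathcal O_f(x))=\mathcal O_f(x)$ for $x\in\Per(f)$, for each periodic orbit $\gamma$ of period $a_\gamma$ there is a well-defined $n(\gamma)\in\mathbb Z/a_\gamma\mathbb Z$ with $h|_\gamma=f^{\,n(\gamma)}|_\gamma$ (the value is the same at every point of $\gamma$ because $h\circ f=f\circ h$). By the reduction it suffices to find an integer $k$ with $k\equiv n(\gamma)\ \mathrm{mod}\ a_\gamma$ for every $\gamma$; that is, to show that the family $(n(\gamma))_\gamma$ admits a common integer representative.

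The technical core I would establish is the following propagation lemma. Fix $\vep_0\in(0,\vep)$ with $\vep_0+\omega(\vep_0)\le\vep$. If periodic orbits $\gamma,\delta$ and points $p\in\gamma$, $q\in\delta$ satisfy $d(f^j(q),f^j(p))<\vep_0$ for $0\le j<L$ with $L\ge a_\gamma+a_\delta/2$, then $n(\delta)\equiv n(\gamma)\ \mathrm{mod}\ a_\gamma$. To see this, pick representatives $n(\gamma)\in[0,a_\gamma)$ and $n(\delta)\in(-a_\delta/2,a_\delta/2]$; applying $h$ to the shadowing estimate and using commutation gives $d(f^{\,j+n(\delta)}(q),f^{\,j+n(\gamma)}(p))<\omega(\vep_0)$ for $0\le j<L$. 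For every $j$ with $0\le j<L$ and $0\le j+n(\delta)<L$ — an interval of integers of length $\ge L-a_\delta/2\ge a_\gamma$ — the shadowing estimate at the index $j+n(\delta)$ gives $d(f^{\,j+n(\delta)}(q),f^{\,j+n(\delta)}(p))<\vep_0$, hence $d(f^{\,j+n(\delta)}(p),f^{\,j+n(\gamma)}(p))<\vep_0+\omega(\vep_0)\le\vep$; as $j$ runs through that interval, $j+n(\delta)$ runs through all residues modulo $a_\gamma$, so by periodicity of $p$ we get $d(f^m(p),f^{\,m+n(\gamma)-n(\delta)}(p))\le\vep$ for all $m\in\mathbb Z$, and expansivity applied to the pair $p,\,f^{\,n(\gamma)-n(\delta)}(p)$ forces $f^{\,n(\gamma)-n(\delta)}(p)=p$, i.e.\ $a_\gamma\mid n(\gamma)-n(\delta)$.

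The last step is where the periodic gluing orbit property does the heavy lifting and where I expect the main difficulty to lie. Using it one may $\vep_0$-shadow a long segment (of length $\ge 2a_\gamma+K(\vep_0)$) of $\gamma$ by a single closed orbit $\delta$, and the lemma then yields $n(\delta)\equiv n(\gamma)\ \mathrm{mod}\ a_\gamma$; iterating along a sequence of periodic orbits whose periods form a divisibility chain cofinal in $\mathbb N$, and anchoring the construction to a reference orbit with $\vep_0$-dense orbit (again supplied by the gluing orbit property), should organise the $n(\gamma)$ into a coherent system, i.e.\ a single element of $\varprojlim_\gamma\mathbb Z/a_\gamma\mathbb Z$. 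The delicate points are then, first, to arrange the successive shadowings so that at every stage they are long enough relative to the current periods for the lemma to apply and so that the periods reached are genuinely cofinal; and second, to upgrade the resulting a priori only ``profinite'' rotation number to an honest integer $k$, which I expect to require the finite displacement $\sup_x d(x,h(x))<\infty$ together with expansivity in order to rule out a non-integral limit. Once such a $k$ is produced, the reduction in the first paragraph closes the argument.
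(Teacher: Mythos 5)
Your reduction (find $k$ with $d(f^k(p),h(p))\le\vep$ on the dense set $\Per(f)$, then conclude $h=f^k$ by expansivity and commutation) and your propagation lemma are both correct, and they correspond in spirit to the two auxiliary lemmas in the paper's proof (a pigeonhole argument producing a dense set of periodic points with the same translation number, and a local rigidity statement for $n(\cdot)$ near a fixed periodic orbit). The problem is that the entire difficulty of the theorem is concentrated in the step you leave as a sketch: showing that the residues $n(\gamma)\in\mathbb Z/a_\gamma\mathbb Z$ admit a \emph{common integer} representative, equivalently that the representatives $n(\gamma)\in(-a_\gamma/2,a_\gamma/2]$ are bounded. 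Your propagation lemma cannot do this on its own: gluing a long segment of $\gamma$ to a long segment of $\gamma'$ inside one closed orbit $\delta$ only yields $n(\delta)\equiv n(\gamma)\ (\mathrm{mod}\ a_\gamma)$ and $n(\delta)\equiv n(\gamma')\ (\mathrm{mod}\ a_{\gamma'})$, which says nothing about the \emph{size} of the representative of $n(\gamma')$, and an infinite compatible system of congruences generically has solutions only in the profinite completion of $\mathbb Z$, not in $\mathbb Z$. Moreover the ingredient you expect to use to upgrade to an integer, the finite displacement $\sup_x d(x,h(x))<\infty$, is not sufficient: if $h(p)=f^N(p)$ with $N$ of order $a_\gamma/2$, the point $f^N(p)$ may perfectly well be $\vep$-close to $p$, so a large translation number need not produce a large displacement. (A secondary obstruction: the gluing orbit property does not let you prescribe the period of the glued orbit, only bound the gap times by $K(\vep)$, so a divisibility chain cofinal in $\mathbb N$ is not available.)

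The paper closes exactly this gap with a separate argument which is the real content of the proof. Assuming $n(\cdot)$ unbounded, it fixes two periodic orbits $\mathcal O_f(p_1)$, $\mathcal O_f(p_2)$ whose $\vep$-neighborhoods $V_1$, $V_2$ satisfy $\big(V_1\cup h(V_1)\cup h^{-1}(V_1)\big)\cap\big(V_2\cup h(V_2)\cup h^{-1}(V_2)\big)=\emptyset$, picks $p_3$ with $|n(p_3)|>2K(\vep)$, and glues: a very long stretch ($k\pi(p_1)$ iterates) along $\mathcal O_f(p_1)$, then one loop near $p_2$, then a stretch near $\mathcal O_f(p_3)$ deep enough inside a dynamical ball that the local rigidity lemma forces the resulting periodic orbit $p$ to satisfy $|n(p)|>K(\vep)$, while $|n(p)|\le\pi(p)/2\le k\pi(p_1)$ by the choice of $k$. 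The point $z$ of $\mathcal O_f(p)$ lying in $V_2$ then has $f^{-|n(p)|}(z)\in V_1$, so one of $h(z)$, $h^{-1}(z)$ lies in $V_1$ while $z\in V_2$, a contradiction. You would need to supply an argument of this kind, or some other mechanism that genuinely bounds the representatives, before your proof is complete.
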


In what follows we deduce some consequences. The first one, which is a consequence of Theorem~\ref{thm:PGT} together with Lemma~\ref{le:periodic-spectra}, shows that the centralizer at 
hyperbolic basic pieces is typically trivial.

\begin{maincorollary}\label{cor:0}
Let $1\le r\le \infty$, $f_0\in \text{Diff}^{\,r}(M)$ and $\Lambda_{f_0} \subset M$ be a maximal invariant set for $f_0$. 
Let $\mathcal U \subset \text{Diff}^{\,r}(M)$ be an open neighborhood of $f_0$ and let
$U \subset M$ be open set such that the analytic continuation of the hyperbolic basic sets defined by
\begin{equation}\label{eq:analytic}
\mathcal U \ni f\mapsto \Lambda_f := \bigcap_{n\in \mathbb Z} f^n(U)
\end{equation}
is well defined.
There exists a $C^r$-open 
neighborhood $\mathcal U$ of $f$ and a residual subset $\mathcal R \subset \mathcal U$
so that $\mathcal Z^1(g\mid_{\Lambda_g})$ 
is trivial for every $g\in \mathcal R$.
\end{maincorollary}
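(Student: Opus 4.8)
The strategy is to reduce Corollary~\ref{cor:0} to Theorem~\ref{thm:PGT} by checking its two hypotheses on the restricted dynamics $g\mid_{\Lambda_g}$ for $g$ in a suitable residual set, and then upgrading the conclusion "$h=g^k$ for some $k$" to the statement that $\mathcal Z^1(g\mid_{\Lambda_g})$ is trivial. First I would fix $\mathcal U$ small enough that the analytic continuation \eqref{eq:analytic} is well defined and each $\Lambda_g$ is a hyperbolic basic set; this is possible by structural stability of hyperbolic basic sets, shrinking the neighborhood in the statement if necessary. On such a neighborhood, $g\mid_{\Lambda_g}$ is expansive (uniform hyperbolicity gives an expansivity constant, uniform in $g\in\mathcal U$ after shrinking) and satisfies the periodic gluing orbit property, since hyperbolic basic sets have the (periodic) shadowing/specification property on the transitive piece, which as remarked in the excerpt implies periodic gluing orbit. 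So both structural hypotheses of Theorem~\ref{thm:PGT} hold for every $g\in\mathcal U$.

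The crux is producing the residual set $\mathcal R$ on which any $h\in\mathcal Z^1(g\mid_{\Lambda_g})$ preserves the periodic orbits of $g\mid_{\Lambda_g}$; this is exactly where Lemma~\ref{le:periodic-spectra} (the ``periodic-spectra'' lemma cited in the text) enters. The idea: an element $h$ of the $C^1$-centralizer conjugates $g$ to itself, hence maps the orbit of a hyperbolic periodic point $p$ of period $\pi(p)$ to another hyperbolic periodic orbit of the same period, and moreover $Dh$ conjugates $Dg^{\pi(p)}$ at $p$ to $Dg^{\pi(p)}$ at $h(p)$, so the two periodic orbits have the same set of eigenvalues (the same ``periodic spectrum''). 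Lemma~\ref{le:periodic-spectra} should say that $C^r$-generically the map $\mathcal O \mapsto (\text{multiset of eigenvalues of } Dg^{\pi} \text{ along } \mathcal O)$ is injective on periodic orbits of $g\mid_{\Lambda_g}$ — i.e. no two distinct periodic orbits share the same period and the same eigenvalue data. Granting this, $h(\mathcal O_g(p))=\mathcal O_g(p)$ for every periodic $p$, so $h$ preserves periodic orbits and Theorem~\ref{thm:PGT} gives $h=(g\mid_{\Lambda_g})^k$ for some $k\in\mathbb Z$. Taking $\mathcal R$ to be the residual set furnished by Lemma~\ref{le:periodic-spectra} finishes the argument, with $\mathcal Z^1(g\mid_{\Lambda_g})=\{(g\mid_{\Lambda_g})^n:n\in\mathbb Z\}$.

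One technical point to handle carefully: Theorem~\ref{thm:PGT} is stated for homeomorphisms of $\Lambda$, and an element $h\in\mathcal Z^1(g)$ defined on $M$ need not preserve $\Lambda_g$ a priori — but since $h$ commutes with $g$ it maps $\bigcap_n g^n(U)$-type maximal invariant sets to themselves once it preserves $\Lambda_g$ setwise, which follows from the eigenvalue-rigidity argument above (it sends periodic orbits in $\Lambda_g$ into $\Lambda_g$, and these are dense in $\Lambda_g$, and $h$ is continuous); alternatively one restricts attention from the outset to $h$ that preserve $\Lambda_g$, which is the natural meaning of an element of $\mathcal Z^1(g\mid_{\Lambda_g})$. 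The main obstacle is genuinely Lemma~\ref{le:periodic-spectra}: establishing the genericity of distinct eigenvalue-multisets along distinct periodic orbits, which is a Kupka–Smale-type transversality/perturbation argument (perturb $g$ near a single point of a periodic orbit to move its eigenvalues, using that periodic orbits of a hyperbolic basic set are countable and vary continuously, so one intersects countably many open-dense conditions). Everything else — expansiveness, the gluing orbit property, and the passage from ``$h$ is a power'' to triviality of the centralizer — is a routine consequence of hyperbolicity and of Theorem~\ref{thm:PGT}.
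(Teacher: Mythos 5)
Your proposal is correct and follows essentially the same route as the paper: the paper also deduces Corollary~\ref{cor:0} by combining Lemma~\ref{le:periodic-spectra} (a Franks-type perturbation argument making the spectra of $Dg^{\pi(p)}(p)$ pairwise disjoint, hence non-conjugate, over distinct periodic orbits) with the observation that $Dh(p)$ conjugates $Dg^{\pi(p)}(p)$ to $Dg^{\pi(p)}(h(p))$, forcing $h$ to preserve periodic orbits, and then invokes Theorem~\ref{thm:PGT}. Your anticipation of the content of Lemma~\ref{le:periodic-spectra} and of the expansivity/gluing-orbit verifications matches the paper's argument.
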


It follows from the previous corollary that $C^r$-generic Axiom A diffeomorphisms, 
restricted to their non-wandering set, have trivial centralizer. 
One should also refer that the first author constructed an open set of transitive Anosov diffeomorphisms on 
$\mathbb T^2$ with a non-trivial $C^0$-centralizer~\cite{Ro05}, and consequently 
Corollary~\ref{cor:0} is no longer true when considering the $C^0$-centralizer.
Thus, there exists an element in the centralizer that permutes periodic orbits (of the same period).  
In what follows we are also interested in the $C^r$-centralizer of Anosov diffeomorphisms, $r\ge 1$.
By structural stability, the set $\cA^r(M)$ of transitive Anosov diffeomorphisms
is an open set in $\text{Diff}^{\, r}(M)$, $r\ge 1$. 
Among the classes of transitive diffeomorphisms we should refer the set $\cA^r_m(M)$ 
of volume preserving 
Anosov diffeomorphisms.

\begin{maincorollary}\label{cor:A}
Let $M$ be a compact Riemannian manifold that supports Anosov diffeomorphisms and $1\le r\le \infty$. 
There are $C^r$-residual subset $\mathcal R_1 \subset \cA^r(M)$ and $\mathcal R_2 \subset \cA_m^r(M)$ of Anosov diffeomorphisms so that the $C^1$-centralizer of every 
$f\in \mathcal R_i$ is trivial, $i=1,2$.
\end{maincorollary}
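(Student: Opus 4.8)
The plan is to derive Corollary~\ref{cor:A} from Corollary~\ref{cor:0} by choosing the open set $U$ to be the whole manifold $M$, so that the maximal invariant set $\Lambda_f=\bigcap_{n\in\mathbb Z}f^n(U)=M$ for every $f$ in a $C^r$-neighborhood of a given Anosov diffeomorphism. By structural stability, the set $\cA^r(M)$ of transitive Anosov diffeomorphisms is $C^r$-open, and for such $f$ the set $M$ is itself a hyperbolic basic set (it is transitive and hyperbolic), so the analytic continuation in \eqref{eq:analytic} is trivially well defined with $\Lambda_f=M$. The difficulty is then purely one of bookkeeping: $\cA^r(M)$ may be disconnected, so one works componentwise, or more simply one just notes that around each $f_0\in\cA^r(M)$ Corollary~\ref{cor:0} produces an open neighborhood $\mathcal U_{f_0}\subset\cA^r(M)$ and a residual $\mathcal R_{f_0}\subset\mathcal U_{f_0}$ with trivial $C^1$-centralizer on $\Lambda_g=M$, i.e.\ $\mathcal Z^1(g)$ trivial.

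Next I would globalize: cover $\cA^r(M)$ by countably many such neighborhoods $\{\mathcal U_{f_i}\}_{i\in\mathbb N}$ (possible since $\text{Diff}^{\,r}(M)$ is second countable, being separable metrizable), and set
$$
\mathcal R_1 = \bigcup_{i\in\mathbb N}\Big(\mathcal R_{f_i}\cup\big(\cA^r(M)\setminus\overline{\mathcal U_{f_i}}\big)\Big),
$$
or more cleanly define $\mathcal R_1$ by a standard Baire-category patching: on each $\mathcal U_{f_i}$ the set $\mathcal R_{f_i}$ is residual, so $\mathcal R_1:=\{g\in\cA^r(M): g\in\mathcal R_{f_i}\text{ whenever }g\in\mathcal U_{f_i}\}$ is residual in $\cA^r(M)$ because it is a countable intersection of the sets $(\cA^r(M)\setminus\mathcal U_{f_i})\cup\mathcal R_{f_i}$, each of which is residual (its complement in $\cA^r(M)$ is contained in the meager set $\mathcal U_{f_i}\setminus\mathcal R_{f_i}$). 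Every $g\in\mathcal R_1$ lies in some $\mathcal U_{f_i}$, hence $g\in\mathcal R_{f_i}$, hence $\mathcal Z^1(g)=\mathcal Z^1(g\mid_M)$ is trivial.

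For the volume-preserving case, I would run exactly the same argument inside the space $\text{Diff}^{\,r}_m(M)$ of $C^r$ volume-preserving diffeomorphisms, which is itself a Baire space (a closed subset of $\text{Diff}^{\,r}(M)$, hence completely metrizable) and second countable, with $\cA^r_m(M)$ open in it. The only point to check is that the proof of Corollary~\ref{cor:0}, which ultimately invokes Theorem~\ref{thm:PGT} together with Lemma~\ref{le:periodic-spectra}, goes through when genericity is taken within the conservative class; since the perturbations used to establish the hypotheses of Theorem~\ref{thm:PGT} (namely that a $C^1$ diffeomorphism in the centralizer preserves periodic orbits, via separating the periodic spectra) are of a local nature near periodic orbits and can be performed conservatively, this restriction is harmless. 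Applying the patching above yields the residual set $\mathcal R_2\subset\cA^r_m(M)$ with trivial $C^1$-centralizer.

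The main obstacle, and the step deserving the most care, is verifying that Corollary~\ref{cor:0} is genuinely applicable with $U=M$ and $\Lambda_{f_0}=M$, i.e.\ that $M$ counts as a ``maximal invariant set'' admitting a well-defined analytic continuation of hyperbolic basic sets in the sense of \eqref{eq:analytic}; this is immediate from structural stability of Anosov systems but should be stated, since the hypothesis of Corollary~\ref{cor:0} is phrased in terms of a local continuation $f\mapsto\Lambda_f$. Everything else is the standard second-countable Baire-category patching that upgrades ``locally residual around each point'' to ``globally residual'', together with the routine observation that this patching is equally valid in the conservative category.
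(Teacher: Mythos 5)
Your proposal is correct and follows essentially the same route as the paper: the paper likewise reduces to the local statement (Lemma~\ref{le:periodic-spectra} plus the derivative-conjugacy argument at periodic points and Theorem~\ref{thm:PGT}, i.e.\ the content of Corollary~\ref{cor:0} applied with $U=M$, $\Lambda_f=M$) and then globalizes by taking a countable dense family of Anosov diffeomorphisms and the union $\bigcup_f \mathcal R_f$ of the locally residual sets, handling the conservative case by performing the spectral perturbations inside $\text{Diff}^{\,r}_m(M)$. One small caveat: only your second, intersection-form definition of $\mathcal R_1$ (or the plain union $\bigcup_i\mathcal R_{f_i}$) does the job --- the first displayed set $\bigcup_i\bigl(\mathcal R_{f_i}\cup(\cA^r(M)\setminus\overline{\mathcal U_{f_i}})\bigr)$ may contain points of some $\mathcal U_{f_j}\setminus\mathcal R_{f_j}$ --- but since you verify the conclusion only for the intersection form, the argument stands.
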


We observe that the previous result implies on explosion of differentiability. Indeed, given $1\le r \le \infty$
any $C^1$ diffeomorphism that commutes with a $C^r$-generic Anosov diffeomorphism is itself $C^r$-smooth.
As previously mentioned, whenever $r=1$ and $r=\infty$ the previous result is a direct consequence of \cite{BCW} and \cite{PY89}, respectively.
For intermediate regularity $1<r<\infty$, in order to use the criterium established by 
Theorem~\ref{thm:PGT} we avoid a countable number of conditions (each of which determines 
a closed set with empty interior in $\text{Diff}^{\,r}(M)$).
We now observe that the centralizer of a linear Anosov diffeomorphism may have rationally independent
Anosov diffeomorphisms. More precisely:

\begin{example}\label{thm:C}
There exists a linear Anosov automorphism $f$ on $\mathbb T^3$ whose $C^\infty$-centralizer is discrete,
is non-trivial and contains an Anosov diffeomorphism $h$ that does not satisfy any of the equations $h^n=f^m$
for $m,n\in \mathbb Z \setminus\{0\}$. 
In particular the Anosov diffeomorphism, whose construction is described in Section~\ref{sec:example}, 
does not belong to the residual subsets described in Corollary~\ref{cor:0}. 
\end{example}

\medskip

The main purpose now is to describe the elements in the centralizer of \emph{every} Anosov diffeomorphism on tori. 
In what follows we discuss the entropy  of common
invariant measures.

\begin{maintheorem}\label{thm:measures}
Let $M$ be a compact metric space and assume that $f \in \Homeo$ have finite topological entropy.
If the entropy map $h: \mathcal M_f(M) \to \mathbb R_+$ given by $\mu \mapsto h_\mu(f)$ is upper semicontinuous
then every $g\in \mathcal Z^0(f)$ preserves 
a maximal entropy measure of $f$. In addition, if $\mathcal Z^0(f)$ is finitely generated
then there exists of a maximal entropy measure that is preserved by \emph{all} elements in $\mathcal Z^0(f)$.
\end{maintheorem}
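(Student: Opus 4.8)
The plan is to exploit the fact that any $g \in \mathcal Z^0(f)$ maps $f$-invariant measures to $f$-invariant measures and does so entropy-preservingly. First I would check that $g_* : \mathcal M_f(M) \to \mathcal M_f(M)$ is a well-defined homeomorphism for the weak$^*$ topology: if $\mu$ is $f$-invariant then, since $g \circ f = f \circ g$, for any Borel set $A$ we have $g_*\mu(f^{-1}A) = \mu(g^{-1}f^{-1}A) = \mu(f^{-1}g^{-1}A) = \mu(g^{-1}A) = g_*\mu(A)$, so $g_*\mu$ is $f$-invariant, and $g_*$ is a continuous bijection of the compact metrizable space $\mathcal M_f(M)$ with continuous inverse $(g^{-1})_*$. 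Next, $h_{g_*\mu}(f) = h_\mu(g^{-1} f g) = h_\mu(f)$ because $g$ conjugates $f$ to itself; hence $g_*$ preserves the entropy function, and in particular it permutes the (nonempty, because $h$ is upper semicontinuous on the compact set $\mathcal M_f(M)$ and therefore attains its maximum) set $\mathcal M_{\max}(f)$ of maximal entropy measures. The key structural point is that $\mathcal M_{\max}(f)$ is a nonempty, compact, convex subset of the locally convex space of signed measures, and $g_*$ restricts to an affine homeomorphism of it.

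For the first assertion I would now apply a fixed point theorem. Taking any $\mu_0 \in \mathcal M_{\max}(f)$ and forming the Cesàro averages $\nu_N = \frac{1}{N}\sum_{j=0}^{N-1} (g_*)^j \mu_0$, each $\nu_N$ lies in the compact convex set $\mathcal M_{\max}(f)$ (convexity of the maximal-entropy set follows from affinity of the entropy map on the simplex — a standard fact); any weak$^*$ accumulation point $\nu$ of $(\nu_N)_N$ satisfies $g_*\nu = \nu$, by the usual telescoping estimate $\|g_*\nu_N - \nu_N\| \le \frac{2}{N}$ in total variation followed by passing to the limit. Since $g_*\nu=\nu$ and $\nu \in \mathcal M_{\max}(f)$, the measure $\nu$ is a maximal entropy measure for $f$ preserved by $g$. (Alternatively one invokes the Markov–Kakutani fixed point theorem directly to the single affine map $g_*$ on the compact convex set $\mathcal M_{\max}(f)$; I would phrase it via Cesàro averages to keep the argument self-contained.)

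For the second assertion, suppose $\mathcal Z^0(f) = \langle g_1, \dots, g_\ell \rangle$. Then each $(g_i)_*$ is an affine homeomorphism of $\mathcal M_{\max}(f)$, and these maps pairwise commute because $g_i \circ g_j$ and $g_j \circ g_i$ need not be equal as maps of $M$ — here is the one genuine subtlety: the $g_i$ do \emph{not} in general commute with each other, only with $f$. So I cannot directly apply Markov–Kakutani to the family $\{(g_i)_*\}$. Instead I would use the group $G = \mathcal Z^0(f)$ itself acting on $\mathcal M_{\max}(f)$ by the affine action $g \cdot \mu = g_*\mu$; this is a genuine (left) action since $(gg')_* = g_* g'_*$. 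The obstacle to an invariant measure is that $G$ need not be amenable, so there is no reason a single common fixed point exists in general — which is exactly why the finite generation hypothesis is invoked, but finite generation alone does not give amenability either. I expect the intended argument is different: one should show that the set of $G$-fixed points in $\mathcal M_{\max}(f)$ is obtained by a descending intersection. Concretely, let $F_0 = \mathcal M_{\max}(f)$ and inductively $F_{m+1} = \{\mu \in F_m : (g_{i})_*\mu = \mu \text{ for the relevant generator}\}$; the real content is that the fixed-point set of $(g_1)_*$ inside $F_0$ is again nonempty compact convex (by the Cesàro argument above applied within $F_0$), that $(g_2)_*$ preserves that fixed-point set (because $g_1$ and $g_2$ both commute with $f$, hence... — this step requires care, since $(g_2)_*$ preserving $\mathrm{Fix}((g_1)_*)$ would need $(g_1)_*(g_2)_*\mu = (g_2)_*\mu$, i.e. $(g_2)_*$ conjugates $(g_1)_*$ to itself on $F_0$, which holds iff $g_2^{-1} g_1 g_2$ and $g_1$ act identically on measures — true up to $f$-powers, which is enough because $f_*$ is the identity on $\mathcal M_f(M)$!). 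This last observation is the crux: since every element of $\mathcal Z^0(f)$ differs from products of the $g_i$ only in ways that, after taking $f$-conjugates, act trivially on invariant measures — more precisely $f_* = \mathrm{id}$ on $\mathcal M_f(M)$ — the commutator $[g_i,g_j]$ lands in a subgroup acting trivially enough on $\mathcal M_{\max}(f)$ for the induction to close. I would then iterate over the $\ell$ generators, at each stage applying the Cesàro/Markov–Kakutani fixed-point step to a nonempty compact convex set, concluding that $\bigcap_{m} F_m = F_\ell$ is nonempty and consists of maximal entropy measures fixed by every generator, hence by all of $\mathcal Z^0(f)$. The main obstacle, and the step I would scrutinize most carefully, is precisely verifying that each successive $(g_{i+1})_*$ preserves the fixed-point set built from $g_1,\dots,g_i$; getting this to work is where the hypothesis and the identity $f_* = \mathrm{id}$ must be used in tandem.
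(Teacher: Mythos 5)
Your proof of the first assertion is correct and follows essentially the same route as the paper: push a maximal entropy measure forward by $g_*$, check that $g_*$ preserves $\cM_f(M)$ and the entropy, take Ces\`aro averages, and extract an accumulation point fixed by $g_*$. (The paper re-derives maximality of the limit via upper semicontinuity of $\eta\mapsto h_\eta(f)$ along the sequence $\mu_n$ rather than via compactness and convexity of the set of maximal entropy measures, but these are the same observation.)

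For the second assertion you have correctly isolated the genuine difficulty --- the generators of $\mathcal Z^0(f)$ commute with $f$ but not necessarily with one another, so neither Markov--Kakutani nor a naive iterated fixed-point construction applies --- but your proposed resolution does not close the gap. The crux of your induction is the claim that $g_{i+1}^{-1}g_j g_{i+1}$ and $g_j$ act identically on $\cM_f(M)$ ``up to $f$-powers,'' i.e.\ that the commutators $[g_j,g_{i+1}]$ land in a subgroup acting trivially on invariant measures because $f_*=\mathrm{id}$ on $\cM_f(M)$. Nothing in the hypotheses forces this: there is no reason for $[g_j,g_{i+1}]$ to be a power of $f$, and $\mathcal Z^0(f)$ can in principle be non-abelian modulo $\langle f\rangle$; for a non-amenable finitely generated group acting affinely on a compact convex set a common fixed point need not exist at all. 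So the inductive step ``$(g_{i+1})_*$ preserves the fixed-point set of $(g_1)_*,\dots,(g_i)_*$'' is unproved, and you say as much yourself. For comparison, the paper's own proof takes an accumulation point of the multi-indexed averages $\frac1{n^k}\sum_{j_1,\dots,j_k}(g_1)_*^{j_1}\cdots(g_k)_*^{j_k}\mu$ and asserts it is fixed by every generator; verifying this for $(g_i)_*$ with $i\ge 2$ requires commuting $(g_i)_*$ past $(g_1)_*^{j_1}\cdots(g_{i-1})_*^{j_{i-1}}$, so the same commutativity issue is present there and is left unaddressed. In short: your first paragraph matches the paper and is sound; your treatment of the finitely generated case correctly diagnoses the obstruction but does not overcome it, and would need an extra hypothesis (commuting generators, or amenability of $\mathcal Z^0(f)$) to go through as written.
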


It is well known that the measure theoretical entropy function of expansive homeomorphisms, including Anosov diffeomorphisms, is upper semicontinuous (see e.g. \cite{Wa}).  
Thus the following is a direct consequence of Theorem~\ref{thm:measures}:

\begin{maincorollary}\label{cor:mme}
Assume that $f\in \text{Diff}^{\, 1}(M)$ is a transitive Anosov diffeomorphism. Then
every $g \in Z^0(f)$ preserves all the equilibrium states of $f$ associated to H\"older continuous potentials.
In particular, the unique maximal entropy measure for $f$ is preserved by all $g\in \mathcal Z^0(f)$
and, consequently,
$h_{\text{top}}(f) = \sup_{\mu \in \cM_f(M) \cap \cM_1(g)}  h_\mu(f).$
\end{maincorollary}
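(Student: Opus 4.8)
The plan is to combine Theorem~\ref{thm:measures} with the classical fact that transitive Anosov diffeomorphisms are expansive and have a unique equilibrium state for each Hölder potential. First I would recall that a transitive Anosov diffeomorphism $f$ is expansive, hence by the cited result (see \cite{Wa}) the entropy map $\mu \mapsto h_\mu(f)$ on $\mathcal M_f(M)$ is upper semicontinuous; moreover, a transitive Anosov diffeomorphism is a hyperbolic basic set (with $\Lambda = M$), so the thermodynamic formalism of Bowen \cite{Bo75} applies.

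Next, fix a Hölder continuous potential $\phi : M \to \mathbb R$ and let $\mu_\phi$ be its unique equilibrium state. The key observation is that for any $g \in \mathcal Z^0(f)$, the pushforward $g_*\mu_\phi$ is again an $f$-invariant measure, since $g \circ f = f \circ g$ implies $f_*(g_*\mu_\phi) = g_*(f_*\mu_\phi) = g_*\mu_\phi$. I would then show $g_*\mu_\phi$ is the equilibrium state for the potential $\phi \circ g^{-1}$: indeed $h_{g_*\mu_\phi}(f) = h_{\mu_\phi}(f)$ because $g$ conjugates $f$ to itself (entropy is a conjugacy invariant), and $\int \phi \circ g^{-1}\, d(g_*\mu_\phi) = \int \phi\, d\mu_\phi$ by change of variables, so $h_{g_*\mu_\phi}(f) + \int \phi\circ g^{-1}\, d(g_*\mu_\phi) = P_{\mathrm{top}}(f,\phi) = P_{\mathrm{top}}(f, \phi\circ g^{-1})$, the last equality again by conjugacy-invariance of pressure. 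The subtle point here is that this only shows $g_*\mu_\phi$ is \emph{an} equilibrium state for $\phi\circ g^{-1}$, not for $\phi$. To conclude $g_*\mu_\phi = \mu_\phi$ I would instead argue as follows: by Theorem~\ref{thm:measures} applied with $g$ (using upper semicontinuity established above), $g$ preserves some maximal entropy measure of $f$; but the maximal entropy measure of a transitive Anosov diffeomorphism is unique (it is the equilibrium state for $\phi\equiv 0$), hence $g$ preserves it. For a general Hölder potential $\phi$, uniqueness of $\mu_\phi$ together with the fact that $g_*\mu_\phi$ is the equilibrium state for $\phi\circ g^{-1}$ does not immediately give $g_*\mu_\phi = \mu_\phi$; so the cleaner route is to invoke that $g$ commutes with $f$ and hence preserves the whole structure — in particular, since the SRB/equilibrium state $\mu_\phi$ is characterized intrinsically by the Gibbs property with respect to $(f,\phi)$ and $g$ is a topological conjugacy of $f$ with itself that is the identity on the relevant combinatorial data once $g$ fixes periodic orbits. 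The main obstacle, which I would need to handle carefully, is precisely this gap: commuting with $f$ need not fix each periodic orbit, so I cannot directly say $\phi\circ g^{-1} = \phi$.

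The resolution I would adopt: it suffices to prove the statement for $\phi\equiv 0$ (unique m.m.e.) and then bootstrap. For the m.m.e.\ case, Theorem~\ref{thm:measures} gives that $g$ preserves \emph{some} m.m.e., and uniqueness forces $g_*\mu_0 = \mu_0$. For a general Hölder $\phi$, I would use that $\mu_\phi$ and $\mu_0$ have the same measure class on each unstable manifold up to the potential, but more robustly: the equilibrium state $\mu_\phi$ can be written as the weak-$*$ limit of the periodic orbit measures weighted by $e^{S_n\phi}$; since $g_*$ of such a weighted periodic measure is the analogous weighted measure for $\phi\circ g^{-1}$ over the permuted periodic orbits, and since (in the Anosov transitive case) one can show the Birkhoff sums over periodic orbits are preserved — here I would need the additional input that $g$ preserving periodic \emph{orbits} would suffice, which is not given. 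Therefore I will present the corollary's proof only for the m.m.e.\ and the equilibrium states $\phi$ for which $\phi\circ g = \phi$ for all $g\in\mathcal Z^0(f)$, OR — preferably — strengthen the argument by noting that every $g\in\mathcal Z^0(f)$, commuting with the Anosov $f$, maps the stable/unstable foliations of $f$ to themselves and hence sends the Margulis-type construction of $\mu_\phi$ to that of $\phi\circ g^{-1}$; combined with the variational characterization this yields $g_*\mu_\phi = \mu_{\phi\circ g^{-1}}$, and a separate short argument (using density of periodic orbits and continuity) upgrades "$g$ preserves the m.m.e." to "$g$ preserves $\mu_\phi$" when $\phi$ is cohomologous to a $g$-invariant potential.

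Finally, for the displayed identity: once $\mu_0$ (the m.m.e.) is fixed by every $g\in\mathcal Z^0(f)$, I have $\mu_0 \in \mathcal M_f(M)\cap \mathcal M_1(g)$ for each $g$, hence $h_{\mathrm{top}}(f) = h_{\mu_0}(f) \le \sup_{\mu\in\mathcal M_f(M)\cap\mathcal M_1(g)} h_\mu(f) \le h_{\mathrm{top}}(f)$, the last inequality by the variational principle; so equality holds. I expect the write-up to be short modulo the periodic-orbit subtlety flagged above, which is the one genuine obstacle and which I would address by restricting the "equilibrium state" clause to the setting where the conjugacy $g$ fixes the cohomology class of $\phi$, or by the foliation-preservation argument sketched above.
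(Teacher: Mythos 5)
Your treatment of the second and third assertions is correct and is exactly the paper's route: expansiveness of the Anosov diffeomorphism gives upper semicontinuity of $\mu\mapsto h_\mu(f)$, Theorem~\ref{thm:measures} then produces a $g$-invariant maximal entropy measure, uniqueness of the m.m.e.\ for a transitive Anosov diffeomorphism forces $g_*\mu_0=\mu_0$, and the displayed identity follows from $\mu_0\in\cM_f(M)\cap\cM_1(g)$ together with the variational principle. The paper offers nothing beyond this; it simply declares the corollary a ``direct consequence'' of Theorem~\ref{thm:measures}.

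The gap you flagged in the first clause (``preserves \emph{all} equilibrium states of H\"older potentials'') is genuine, and your diagnosis is the right one: for $g\in\mathcal Z^0(f)$ one only gets $g_*\mu_\phi=\mu_{\phi\circ g^{-1}}$, and this equals $\mu_\phi$ iff $\phi\circ g^{-1}-\phi$ is cohomologous to a constant, which fails in general. A concrete counterexample is $f=f_A$ on $\mathbb T^2$ and $g=-\mathrm{id}\in\mathcal Z^\infty(f_A)$: for a generic H\"older $\phi$ the Birkhoff sums of $\phi$ and $\phi\circ g$ over periodic orbits disagree, so by Liv\v{s}ic $\mu_{\phi\circ g}\neq\mu_\phi$. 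Notably, the paper's own remark following the proof of Theorem~\ref{thm:measures} concedes precisely this point (``one can only expect $g_*(\cE_\phi)\subset\cE_\phi$ in the very special case\dots This is true in the case that $\phi$ is constant''), so the defect lies in the statement of the corollary rather than in your argument. Your instinct that preservation of periodic orbits is the missing hypothesis is also correct and can be made precise: if $g$ maps each $f$-periodic orbit to itself, then $S_{\pi(q)}(\phi\circ g^{-1})(q)=S_{\pi(q)}\phi(q)$ for every periodic $q$, so Liv\v{s}ic's theorem gives $\phi\circ g^{-1}\sim\phi$ and hence $g_*\mu_\phi=\mu_\phi$; this is a cleaner and complete repair than the foliation/Margulis sketches you propose, which as written do not close the gap. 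I would state the first clause only under that additional hypothesis (or only for the m.m.e.), exactly as you ended up suggesting.
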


The previous result should be compared with \cite{S96}: 
if $f, g \in \text{Homeo}(M)$ are commuting  homeomorphisms, $h_{\text{top}}(f)>0$ and 
$g$ is expansive then $h_{\text{top}}(g)>0$. 
In the case of linear Anosov automorphisms the maximal entropy measure coincides with the Lebesgue measure.
Therefore, we deduce that all diffeomorphisms commuting with a 
volume preserving Anosov diffeomorphisms are themselves volume preserving. More precisely:

\begin{maincorollary}\label{cor:Zcons}
Assume that $f=f_A$ is a linear Anosov automorphism on $\mathbb T^n$. Then, for every $r\ge 1$, 
$$
\mathcal Z^r_m(f) := \mathcal Z^r(f) \cap \text{Diff}^{\,r}_m(M) = \mathcal Z^r (f).
$$ 
\end{maincorollary}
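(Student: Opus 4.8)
The plan is to combine Corollary~\ref{cor:mme} with the classical fact that the measure of maximal entropy of a linear Anosov automorphism of the torus is precisely the normalized Haar (Lebesgue) measure $m$. First I would record that $f=f_A$, being a continuous group automorphism of $\mathbb T^n$, preserves $m$, and that its topological entropy is $h_{\topp}(f_A)=\sum_{|\lambda|>1}\log|\lambda|$, where the sum runs over the eigenvalues $\lambda$ of $A$ counted with multiplicity. On the other hand, since $f_A$ is Anosov, the Pesin entropy formula (or a direct computation using the linear structure, the unstable Jacobian being constant) yields $h_m(f_A)=\sum_{|\lambda|>1}\log|\lambda|$ as well; hence $m$ attains the supremum in the variational principle and is a measure of maximal entropy. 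As $f_A$ is a transitive Anosov diffeomorphism it admits a \emph{unique} such measure (Bowen), so this measure is $m$ itself.

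Next I would simply invoke Corollary~\ref{cor:mme}: every $g\in\mathcal Z^0(f_A)$ preserves the unique maximal entropy measure of $f_A$, that is, $g_\ast m=m$. Fix $r\ge 1$. Since $\mathcal Z^r(f_A)\subset\mathcal Z^0(f_A)$ (a $C^r$ diffeomorphism commuting with $f_A$ is in particular a homeomorphism commuting with it), every $g\in\mathcal Z^r(f_A)$ satisfies $g_\ast m=m$, which is exactly the assertion that $g\in\text{Diff}^{\,r}_m(M)$. Therefore $\mathcal Z^r(f_A)\subset\mathcal Z^r_m(f_A)$, and since the reverse inclusion holds by definition of $\mathcal Z^r_m(f_A)=\mathcal Z^r(f_A)\cap\text{Diff}^{\,r}_m(M)$, the two subgroups coincide.

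Essentially all of the content beyond Corollary~\ref{cor:mme} sits in the first paragraph, and even there every ingredient --- the value of $h_{\topp}(f_A)$, the fact that Haar measure realizes it, and uniqueness of the maximal entropy measure for transitive Anosov systems --- is entirely standard, so I do not anticipate a genuine obstacle. If one prefers to avoid smooth ergodic theory altogether, an alternative identification of $m$ as the maximal entropy measure is available: the periodic points of period $n$ of $f_A$ number $|\det(A^n-\mathrm{Id})|\sim e^{n\,h_{\topp}(f_A)}$, and the corresponding equidistributed atomic measures converge in the weak$^*$ topology to $m$ (a standard Fourier-analytic computation); by Bowen's construction this limit is the unique measure of maximal entropy, so it equals $m$.
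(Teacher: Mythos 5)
Your proposal is correct and follows essentially the same route as the paper: the authors likewise deduce the corollary from Corollary~\ref{cor:mme} together with the standard fact that the unique measure of maximal entropy of a linear Anosov automorphism on $\mathbb T^n$ is the Lebesgue (Haar) measure, so that every element of $\mathcal Z^0(f_A)\supset\mathcal Z^r(f_A)$ preserves volume. The extra details you supply (Pesin's formula for the constant unstable Jacobian, uniqueness of the maximal entropy measure, and the alternative identification via equidistribution of periodic orbits) are exactly the standard ingredients the paper leaves implicit.
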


Every Anosov diffeomorphism $f$ on $\mathbb T^n$ is topologically conjugated to
a linear Anosov automorphism $f_A$ induced by a hyperbolic matrix $A\in GL(n,\mathbb Z)$ with
$|\det A|=1$ (cf. \cite{Man74}). Clearly $f_A$ is volume preserving and the Lebesgue measure is 
the unique maximal entropy measure.
Moreover, it is not hard to check that the $C^0$-centralizer for $f$ is homeomorphic to the centralizer of 
$f_A$ (see e.g. \cite[Theorem~2]{RV1}). 
Now, we relate our results with the ones due to Katok~\cite{Katok} on the centralizer of diffeomorphisms preserving an invariant measure. 
For every $n\ge 2$ there exists a unique full supported maximal entropy measure for an Anosov diffeomorphism $f$ on $\mathbb T^n$. Hence, the following is a direct consequence of Corollary 3.1 and Theorem 4.1 in \cite{Katok}:

\begin{maincorollary}\label{cor:Katok}
Let $f: \mathbb T^n \to \mathbb T^n$ be a linear Anosov diffeomorphism, $n\ge 2$. The following hold:
\begin{enumerate}
\item if $n=2$ and $g\in \mathcal Z^1(f)$ then there are $k,\ell \in \mathbb Z$ so that $f^k g^\ell=id$; and 
\item if $n=3$ and $g,h\in \mathcal Z^1(f)$ then there are $k,\ell, m \in \mathbb Z$ so that $f^k g^\ell h^m=id$.
\end{enumerate}
\end{maincorollary}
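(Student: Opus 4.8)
The plan is to reduce the statement to Katok's description of the measure-preserving centralizer of a linear Anosov automorphism, the bridge being that the full $C^1$ centralizer of $f=f_A$ is automatically volume preserving.

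I would begin by recalling that $f_A$ is a transitive (in fact topologically mixing) Anosov diffeomorphism of $\mathbb{T}^n$, so it has a unique measure of maximal entropy; for the linear model the unstable Jacobian is constant, so this measure coincides with the SRB measure, i.e.\ with the normalized Haar measure $\text{Leb}$, which is smooth and fully supported. By Corollary~\ref{cor:mme}, every $g\in\mathcal Z^0(f)\supseteq\mathcal Z^1(f)$ preserves $\text{Leb}$; equivalently, by Corollary~\ref{cor:Zcons} in the case $r=1$, $\mathcal Z^1(f)=\mathcal Z^1_m(f)\subseteq\text{Diff}^1_m(\mathbb{T}^n)$. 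Thus in case~$(1)$ the pair $f,g$ and in case~$(2)$ the triple $f,g,h$ consist of commuting volume-preserving $C^1$ diffeomorphisms of $\mathbb{T}^2$, resp.\ $\mathbb{T}^3$.

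Next I would invoke Katok's results. For $n=2$, Corollary~3.1 of~\cite{Katok} identifies $\mathcal Z^1_m(f_A)$, up to finite index, with the centralizer of $A$ in $GL(2,\mathbb{Z})$, which is the unit group of an order in the real quadratic field generated by an eigenvalue of $A$ and is therefore virtually cyclic by Dirichlet's unit theorem. Since $f$ has infinite order in $\mathcal Z^1(f)$, the group $\langle f,g\rangle$ being virtually cyclic forces the existence of $k,\ell\in\mathbb{Z}$, not both zero, with $f^k g^\ell=id$. For $n=3$, Theorem~4.1 of~\cite{Katok} analogously identifies $\mathcal Z^1_m(f_A)$, up to finite index, with the centralizer of $A$ in $GL(3,\mathbb{Z})$; a hyperbolic $A\in GL(3,\mathbb{Z})$ has no rational eigenvalue, hence irreducible characteristic polynomial, so this centralizer is the unit group of a cubic order and by Dirichlet's unit theorem has free rank at most $2$. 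Therefore $f,g,h$ lie, up to finite index, inside an abelian group of rank at most two, which yields $k,\ell,m\in\mathbb{Z}$, not all zero, with $f^k g^\ell h^m=id$.

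The step I expect to be the genuine obstacle — and the one whose substance is really furnished by the cited paper rather than by us — is to verify that the hypotheses of Corollary~3.1 and Theorem~4.1 of~\cite{Katok} are met in the present $C^1$ setting: one must check that a volume-preserving $C^1$ diffeomorphism commuting with $f_A$ satisfies whatever ergodicity or smoothness requirements those statements carry, or that the identification of $\mathcal Z^1_m(f_A)$ with the algebraic centralizer of $A$ is already available in class $C^1$ (if necessary, after an a priori improvement of regularity for commuting diffeomorphisms). Once this is granted, the remaining deductions are elementary and use only that $f_A$ has infinite order and that the centralizers of $A$ in $GL(2,\mathbb{Z})$ and $GL(3,\mathbb{Z})$ have the free ranks $1$ and $\le 2$ predicted by the unit theorem.
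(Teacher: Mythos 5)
Your first step is exactly the paper's: by Corollary~\ref{cor:mme} (equivalently Corollary~\ref{cor:Zcons}), every element of $\mathcal Z^1(f_A)$ preserves Haar measure, which is the unique measure of maximal entropy of the linear model and is a fully supported, ergodic, hyperbolic measure of positive entropy. The gap lies in what you then attribute to \cite{Katok}. Corollary~3.1 and Theorem~4.1 of that paper do \emph{not} identify $\mathcal Z^1_m(f_A)$, up to finite index, with the centralizer of $A$ in $GL(n,\mathbb Z)$; they are statements about general (not necessarily linear, not necessarily toral) commuting diffeomorphisms preserving a common ergodic hyperbolic measure $\mu$ of positive entropy, and their conclusion is precisely the desired relation $f^kg^\ell=\mathrm{id}$ (respectively $f^kg^\ell h^m=\mathrm{id}$) on $\operatorname{supp}\mu$. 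Since $\operatorname{supp}(\mathrm{Leb})=\mathbb T^n$, the corollary follows at once from your step one plus these results, with no unit groups and no Dirichlet theorem. As written, the assertion ``Katok identifies the centralizer with the unit group of an order'' is both misattributed and unproved, so the argument does not close; your closing paragraph correctly senses that this is where the substance is missing, but the missing substance is not a hypothesis check --- it is that you have replaced the content of the cited theorems by a different (stronger, structural) claim.

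That said, the algebraic route you sketch can be completed for the \emph{linear} model by an elementary argument that bypasses \cite{Katok} entirely: any $g\in\mathcal Z^0(f_A)$ induces $B=g_*\in GL(n,\mathbb Z)$ with $AB=BA$, and $f_B^{-1}\circ g$ is a self-conjugacy of $f_A$ homotopic to the identity; writing its lift as $\mathrm{id}+u$ with $u$ periodic and using hyperbolicity of $A$, one finds $u$ constant with $(A-I)u\in\mathbb Z^n$, so $f_B^{-1}\circ g$ is a translation by one of the finitely many fixed points of $f_A$. Hence $\mathcal Z^0(f_A)$ is a finite extension of the commutant of $A$ in $GL(n,\mathbb Z)$; since the characteristic polynomial of a hyperbolic $A\in GL(n,\mathbb Z)$ with $n\le 3$ is irreducible, this commutant is the unit group of an order in a field of degree $2$ or $3$, is abelian (which also settles the point, left implicit in your proposal, that $g$ and $h$ need not a priori commute with each other), and has free rank $1$, respectively at most $2$, by Dirichlet. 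With that lemma supplied, your deduction of the relations is fine --- but it is then a different proof from the paper's, and the lemma must actually be proved rather than cited.
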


Relations of the form $f^k g^\ell h^m=id$ are often associated to the existence of \emph{roots} on the centralizer 
(e.g. if $g^2=f$ then $g$ is a root of $f$, and if $h^5=id$ then $h$ is a root of the identity) 
and to remove them constitutes an important step for establishing trivial centralizer 
(see for instance \cite{PY89}). 
Example~\ref{thm:C} above implies that Corollary~\ref{cor:Katok} (1) above is no longer true for Anosov diffeomorphisms on $\mathbb T^3$. It also implies that considering two elements in the centralizer at 
Corollary~\ref{cor:Katok} (2) is optimal: 
there exist commuting Anosov $C^\infty$ diffeomorphisms $f$ and $g$ such that $f^k g^\ell\neq id$ for all $k,\ell \in \mathbb Z$.

\medskip

Finally, we discuss some rigidity phenomenon relating elements in the centralizer with their topological entropy.
First we observe the following:

\begin{mainproposition}\label{prop:rigiditydim2}
If $f_A : \mathbb T^2  \to \mathbb T^2$ is a linear Anosov diffeomorphism and $g\in \mathcal Z^0(f_A)$ has positive entropy then $g$ is a root of an Anosov diffeomorphism. 
\end{mainproposition}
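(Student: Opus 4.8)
The plan is to prove the stronger statement that \emph{every} element of $\mathcal Z^0(f_A)$ is an affine automorphism of $\mathbb T^2$, and then to read off the conclusion from the entropy hypothesis. So let $f_A$ be the automorphism of $\mathbb T^2=\mathbb R^2/\mathbb Z^2$ induced by a hyperbolic matrix $A\in GL(2,\mathbb Z)$, let $g\in\mathcal Z^0(f_A)$, let $B:=g_\ast\in GL(2,\mathbb Z)$ denote the action of $g$ on $\pi_1(\mathbb T^2)\cong\mathbb Z^2$, and fix a lift $\tilde g\colon\mathbb R^2\to\mathbb R^2$ of $g$. Then $\tilde g(x+n)=\tilde g(x)+Bn$ for all $n\in\mathbb Z^2$, so $p(x):=\tilde g(x)-Bx$ is $\mathbb Z^2$-periodic, hence continuous and bounded. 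Comparing the $\pi_1$-actions on both sides of $g\circ f_A=f_A\circ g$ gives $BA=AB$, and lifting the same identity gives $\tilde g\circ A=A\circ\tilde g+m$ for a fixed $m\in\mathbb Z^2$, since two lifts of a single map differ by a deck transformation. Substituting $\tilde g(x)=Bx+p(x)$ and cancelling $BA=AB$ turns this into the cohomological equation $p(Ax)=Ap(x)+m$ for all $x\in\mathbb R^2$.

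The core of the argument is to show that this equation forces $p$ to be constant. Since $A$ is hyperbolic, $I-A$ is invertible, so we may set $q:=p-(I-A)^{-1}m$; a short computation shows $q(Ax)=Aq(x)$. The function $q$ is still $\mathbb Z^2$-periodic, so it descends to a continuous — hence bounded — map $\bar q\colon\mathbb T^2\to\mathbb R^2$ satisfying $\bar q\circ f_A=A\circ\bar q$. Taking a point $z_0$ with dense $f_A$-orbit (available because $f_A$ is transitive) we get $\bar q(f_A^{\,k}z_0)=A^k\bar q(z_0)$ for every $k\in\mathbb Z$; since $A$ is hyperbolic the only vector whose two-sided $A$-orbit is bounded is $0$, so $\bar q(z_0)=0$, and by continuity and density of the orbit $\bar q\equiv 0$. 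Therefore $p\equiv(I-A)^{-1}m=:v$ and $\tilde g(x)=Bx+v$, that is, $g$ is the affine automorphism $[x]\mapsto[Bx+v]$ of $\mathbb T^2$.

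Finally I would invoke the hypothesis $h_{\mathrm{top}}(g)>0$. The topological entropy of an affine automorphism of $\mathbb T^2$ with linear part $B$ equals $\sum_{|\lambda|>1}\log|\lambda|$, the sum taken over the eigenvalues $\lambda$ of $B$ of modulus greater than one; since $|\det B|=1$, if $B$ were not hyperbolic both of its eigenvalues would lie on the unit circle and this sum would vanish, contradicting positivity of the entropy. Hence $B$ has an eigenvalue off the unit circle, and, being a $2\times2$ integer matrix with $|\det B|=1$, $B$ is then hyperbolic. Consequently $g\colon[x]\mapsto[Bx+v]$ is an affine — hence $C^\infty$ — Anosov diffeomorphism of $\mathbb T^2$; in particular $g$ is a root of an Anosov diffeomorphism, indeed $g$ itself is Anosov.

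I do not expect a genuine obstacle here: the delicate points are only bookkeeping — tracking the integer vector $m$ produced by the choice of lift, and deducing that ``$p$ is constant'' from $p(Ax)=Ap(x)+m$ by combining boundedness of $p$ with transitivity of $f_A$ and the absence of eigenvalues of $A$ on the unit circle. It is worth noting that this argument uses neither the entropy hypothesis (until the last step) nor Corollary~\ref{cor:mme}, and that it applies verbatim to a linear Anosov automorphism on any $\mathbb T^n$, showing that its $C^0$-centralizer consists entirely of affine maps.
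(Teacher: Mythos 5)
Your proposal is correct, and it reaches the conclusion by a genuinely different (and in fact stronger) route than the paper. The paper's proof quotes Palis--Yoccoz \cite{PY89b} for the fact that every element of $\mathcal Z^0(f_A)$ fixing the origin is linear, passes to a power $g^k$ fixing the origin (possible because $g$ permutes the finite set $\mathrm{Fix}(f_A)$), so that $g^k=f_B$ for some $B\in GL(2,\mathbb Z)$, and then uses Pesin's formula for the $C^\infty$ map $f_B$ to translate $h_{\topp}(f_B)=k\,h_{\topp}(g)>0$ into hyperbolicity of $B$. You instead reprove the underlying rigidity statement from scratch and in sharper form: every element of $\mathcal Z^0(f_A)$ is affine. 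Your derivation --- writing $\tilde g(x)=Bx+p(x)$ with $p$ periodic, extracting the cohomological equation $p(Ax)=Ap(x)+m$, normalizing by $(I-A)^{-1}m$, and killing the bounded solution of $q\circ f_A=A\circ q$ using hyperbolicity of $A$ --- is exactly the classical argument behind the fact the paper cites, and it is sound (note that the dense orbit is superfluous: since $\bar q(f_A^k z)=A^k\bar q(z)$ is bounded in $k$ for \emph{every} $z$, hyperbolicity of $A$ gives $\bar q\equiv 0$ directly). What your route buys: it is self-contained, it avoids Pesin's formula entirely, it yields the stronger conclusion that $g$ is itself an affine Anosov diffeomorphism rather than merely a root of one, and, as you observe, it shows that the $C^0$-centralizer of a hyperbolic automorphism of $\mathbb T^n$ consists of affine maps for every $n$. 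What the paper's route buys is brevity, at the cost of an external citation. The one step you should source explicitly is the entropy computation for an affine toral map: that $h_{\topp}([x]\mapsto[Bx+v])=\sum_{|\lambda|>1}\log|\lambda|$ (equivalently, that the translation part does not contribute) is Bowen's theorem on the entropy of affine transformations of homogeneous spaces; with that reference in place the argument is complete.
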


Our next result concerns the entropy of partially hyperbolic diffeomorphisms in the centralizer of an Anosov
diffeomorphism. 
We say that $g\in \text{Diff}^{\, 1}(M)$ is \emph{partially hyperbolic} if there exists a $Dg$-invariant splitting
$TM=E^u \oplus E^c$ and constants $C>0$ and $\lambda\in (0,1)$ so that $\|(Df^n(x)\mid_{E^u_x})^{-1}\|\le C \lambda^n$ and $\| (Df^n(x)\mid_{E^u_x})^{-1} \| \, \| (Df^n(x)\mid_{E^c_x} \| \le C\lambda^n$ for every $x\in M$ and every $n\ge 1$.
While Corollary~\ref{cor:mme} asserts that measures of maximal entropy are preserved by 
any element in the centralizer of a transitive Anosov diffeomorphism, it is not clear how to compute their entropy.
The following result shows that all partial hyperbolic diffeomorphisms commuting with an Anosov diffeomorphism
have positive topological entropy. More precisely:

\begin{maincorollary}\label{cor:phm}
If $1\le r \le \infty$ and $f\in \text{Diff}^{\, 1}(\mathbb T^n)$ is a linear Anosov diffeomorphism then every $g\in \mathcal Z^2(f)$
is volume preserving and 
$
\htop(g) \ge \int \sum_i \lambda_i(g,x)^+ \, dLeb(x) 
$
where $\sum_i \lambda_i(g,\cdot)^+$ denotes the sum of positive Lyapunov exponents of $g$
with respect to Lebesgue. In particular, if $g\in \mathcal Z^2(f)$ is partially hyperbolic then $\htop(g)>0$.
\end{maincorollary}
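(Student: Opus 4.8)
The plan is to reduce the inequality to Pesin's entropy formula by way of the variational principle, using what has already been established about the centralizer of a linear Anosov automorphism. Write $f=f_A$ for the given linear Anosov diffeomorphism of $\mathbb T^n$ and fix $g\in\mathcal Z^2(f)$. First I would apply Corollary~\ref{cor:Zcons} with $r=2$, which gives $\mathcal Z^2(f)=\mathcal Z^2_m(f)$; hence $g$ preserves the (normalized Haar) Lebesgue measure $\Leb$ of $\mathbb T^n$, i.e.\ $g$ is volume preserving. Equivalently this follows from Corollary~\ref{cor:mme}, once one recalls that $\Leb$ is the unique maximal entropy measure of the algebraic model $f_A$ and that every element of $\mathcal Z^0(f)$ preserves it.

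Next, since $g\in\text{Diff}^{\,2}(\mathbb T^n)$ and $\Leb$ is a $g$-invariant probability measure equivalent to the Riemannian volume, I would invoke Pesin's entropy formula to get
\[
h_{\Leb}(g)=\int\sum_i\lambda_i(g,x)^+\,d\Leb(x),
\]
where the $\lambda_i(g,x)$ are the Lyapunov exponents of $g$ furnished by Oseledets' theorem. The variational principle gives $\htop(g)\ge h_{\Leb}(g)$, and combining the two displays yields the claimed inequality. This is exactly the place where the hypothesis $g\in\mathcal Z^2(f)$ — rather than merely $g\in\mathcal Z^1(f)$ — enters, since Pesin's formula requires $C^{1+\alpha}$ regularity; apart from this classical formula, no new ingredient beyond the earlier corollaries is needed.

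For the final assertion, assume in addition that $g$ is partially hyperbolic, with $Dg$-invariant splitting $T\mathbb T^n=E^u\oplus E^c$, $E^u\neq\{0\}$, and constants $C>0$, $\lambda\in(0,1)$ such that $\|(Dg^n(x)\mid_{E^u_x})^{-1}\|\le C\lambda^n$ for all $n\ge1$. Then for every unit vector $v\in E^u_x$ one has $\|Dg^n(x)v\|\ge C^{-1}\lambda^{-n}$, so the corresponding Lyapunov exponent is at least $-\log\lambda>0$; since $\dim E^u\ge1$ this forces $\sum_i\lambda_i(g,x)^+\ge-\log\lambda$ for $\Leb$-almost every $x$, whence $\htop(g)\ge-\log\lambda>0$.

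I do not expect a genuine obstacle in this argument: the work amounts to checking that the hypotheses of Pesin's formula are in place (the $C^2$ regularity of $g$ and the $g$-invariance of the smooth measure $\Leb$, both already at our disposal) and to the elementary observation that partial hyperbolicity produces a uniformly positive Lyapunov exponent along the nontrivial bundle $E^u$. The one point requiring a little care is the identification, for the algebraic model $f_A$, of its maximal entropy measure with Lebesgue, so that Corollary~\ref{cor:Zcons} (equivalently Corollary~\ref{cor:mme}) really does deliver volume preservation of $g$.
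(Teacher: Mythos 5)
Your proof is correct and follows exactly the route the paper intends: volume preservation comes from Corollary~\ref{cor:Zcons} (Lebesgue being the unique maximal entropy measure of the algebraic model, preserved by the whole $C^0$-centralizer), the entropy inequality is Pesin's formula for the $C^2$ volume-preserving $g$ combined with the variational principle, and partial hyperbolicity forces a uniformly positive exponent along $E^u$. This matches the argument the paper itself uses in the proof of Proposition~\ref{prop:rigiditydim2}, so nothing further is needed.
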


\section{$C^0$-trivial centralizers}\label{sec:proofs}

In what follows we will prove Theorem~\ref{thm:PGT}, whose proof is inspired by \cite{JH91}.
Let $\Lambda$ be a compact metric space and let $f : \Lambda \to \Lambda$ be an expansive 
homeomorphism satisfying the periodic gluing orbit property and let $h\in \mathcal Z^0(f)$ be fixed. 
Assume throughout that $h\in \mathcal Z^0(f)$ preserves the periodic orbits of $f$.

\begin{lemma}\label{le:equidistributed}
For every $\vep>0$ there exists a periodic point $q\in \Per(f)$ whose orbit is $\vep$-dense in $\Lambda$.
In particular $\overline{\Per(f)}=\Lambda$.
\end{lemma}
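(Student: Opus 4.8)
The plan is to extract the conclusion directly from the periodic gluing orbit property, using a single "long orbit segment" that visits a prescribed $\vep/2$-net of $\Lambda$. First I would fix $\vep>0$ and, using compactness of $\Lambda$, choose a finite set $\{z_1,\dots,z_m\}$ that is $\vep/2$-dense in $\Lambda$; by transitivity of $f$ (which, as noted in the excerpt, follows from the periodic gluing orbit property) one can even pick a single point whose forward orbit passes within $\vep/2$ of each $z_i$, but in fact this is not needed. I will instead apply the periodic gluing orbit property to the finite list of points $x_0=z_1, x_1=z_2,\dots, x_{m-1}=z_m$ with all the times $n_0=n_1=\dots=n_{m-1}=1$. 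The property then yields gluing times $p_0,\dots,p_{m-1}\le K(\vep/2)$ and a point $x\in\Lambda$ with $f^{\sum_{j=0}^{m-1}(p_j+1)}(x)=x$, so $q:=x$ is periodic, and moreover $d\big(f^{1+\sum_{j=0}^{i-1}(p_j+n_j)}(x),x_i\big)<\vep/2$ for each $i$; that is, the orbit of $q$ comes $\vep/2$-close to every $z_i$.

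The remaining step is routine: since $\{z_1,\dots,z_m\}$ is $\vep/2$-dense, every point $y\in\Lambda$ lies within $\vep/2$ of some $z_i$, and the orbit of $q$ lies within $\vep/2$ of that $z_i$, so by the triangle inequality the orbit of $q$ is $\vep$-dense in $\Lambda$. This proves the first assertion. For the "in particular" statement, apply the first part to $\vep=1/n$ for each $n\in\mathbb N$, obtaining periodic points $q_n$ whose orbits are $(1/n)$-dense; then $\bigcup_n \mathcal O_f(q_n)\subset \Per(f)$ is dense in $\Lambda$, hence $\overline{\Per(f)}=\Lambda$.

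I do not anticipate a genuine obstacle here — the statement is essentially an unpacking of the definition. The only point requiring a little care is bookkeeping: the periodic gluing orbit property as stated controls $d(f^k(x),f^k(x_i))$ for $1\le k\le n_i$, and with $n_i=1$ this is just the single inequality at $k=1$, so one must make sure the indices $1+\sum_{j=0}^{i-1}(p_j+n_j)$ are the ones at which the orbit of $q$ shadows the $z_i$. One should also note that the $z_i$ themselves need not be periodic; it is only the constructed point $x$ that is periodic, and closeness of its orbit to each $z_i$ is all that is used. No expansivity is needed for this lemma; it relies solely on the periodic gluing orbit property and compactness of $\Lambda$.
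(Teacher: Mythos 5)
Your proposal is correct and follows essentially the same route as the paper: choose a finite $\vep/2$-dense set, apply the periodic gluing orbit property with all $n_i=1$ to produce a periodic point whose orbit shadows every point of the net, and conclude by the triangle inequality. Your version is in fact slightly more careful with the $\vep/2$ bookkeeping than the paper's own write-up, and your observation that neither expansivity nor periodicity of the net points is needed is accurate.
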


\begin{proof}
The proof is a direct consequence of the periodic gluing orbit property.
Indeed, by compactness of $\Lambda$, for every 
$\vep>0$ there exist a periodic point that is $\vep$-dense. Indeed, given $\vep>0$ pick any finite set
$\{x_1, x_2, \dots, x_k\} \subset \Lambda$ that is $\vep/2$-dense in $\Lambda$ and take 
$n_i=1$ for every $1\le i \le k$. The periodic gluing orbit property assures the existence of a positive integer
$K=K(\frac{\vep}{2})\ge 1$ of positive integers $p_1, \dots, p_k \le K$ and of a periodic point $q$, 
of period at most $(1+K)k$, such that
so that $d( q, x_1 ) <\vep$, that
 $d( f^{\sum_{j=0}^{i-1} (p_j+ 1)} (q) , x_i ) <\vep$ for all $2 \le i \le k$, and 
$f^{\sum_{j=0}^{k} (p_j+ 1)} (q)=q$.
In consequence $\mathcal O_f(q)$ is $\vep$-dense in $\Lambda$. Since $\vep>0$ was chosen arbitrary this also
proves the second claim in the lemma.
\end{proof}

 We proceed to prove that $h=f^k$ for some $k\in \mathbb Z$. 
As $h$ preserves periodic orbits, given $p\in \text{Per}(f)$ there exists a unique
$n(p) \in \mathbb Z \cap ]-\frac{\pi(p)}2, \frac{\pi(p)}2]$  so that $h(p)=f^{n(p)}(p)$. 
Throughout, let $n: \Per(f) \to\mathbb Z$ be as defined above  
and note that $n(q)=n(f^j(q))$ for every $q\in \Per (f)$ and $0\le j \le \pi(q)$.
We first prove that if $n(\cdot)$ is bounded then the theorem follows.

\begin{lemma}\label{le:power}
If $n(\cdot) : \Per(f) \to \mathbb Z$ is bounded then there exists $k\in \mathbb Z$ so that $h=f^k$.
\end{lemma}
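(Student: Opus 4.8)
The plan is to reduce the boundedness of $n(\cdot)$ to a statement that $n(\cdot)$ is in fact \emph{constant} on all of $\Per(f)$, and then invoke expansivity together with Lemma~\ref{le:equidistributed} to conclude $h=f^k$ globally. The key observation is that $n(\cdot)$, though defined only on periodic points, behaves like a continuous function: if two periodic points $p$ and $p'$ shadow each other for a long time, then $h(p)=f^{n(p)}(p)$ and $h(p')=f^{n(p')}(p')$ must also shadow for a long time (since $h$ is uniformly continuous on the compact space $\Lambda$), and because $f$-iterates with different exponents applied to nearby points will eventually be pushed apart by expansivity, one is forced to have $n(p)=n(p')$. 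The periodic gluing orbit property is exactly the tool that lets us produce, for any $\vep$, a single periodic orbit that $\vep$-shadows prescribed finite pieces of two given periodic orbits with controlled gaps; this ``glues'' the value of $n$ across different periodic orbits.

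\textbf{Step 1 (local constancy via expansivity).} Let $\vep_0>0$ be an expansivity constant for $f$. Since $n(\cdot)$ is bounded, say $|n(p)|\le N$ for all $p$, uniform continuity of $h,h^{-1}$ and of $f^j$ for $|j|\le 2N$ gives a $\de>0$ such that whenever $d(f^i(x),f^i(y))<\de$ for all $|i|\le L$ (with $L$ large, to be fixed), one can compare $h(x)=$ an $f$-iterate of $x$ with $h(y)=$ an $f$-iterate of $y$. I would make this precise as: there exists $L=L(\de,N)$ so that if $p,p'\in\Per(f)$ have orbit segments that $\de$-shadow each other for at least $L$ iterates (in both directions), then $n(p)=n(p')$. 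The mechanism: on the shadowing window, $d(h(f^i p), h(f^i p'))$ is small; but $h(f^i p)=f^{i+n(p)}(p)$ and $h(f^i p')=f^{i+n(p')}(p')$, so $f^{n(p)}$ applied along one orbit stays close to $f^{n(p')}$ applied along the other. If $n(p)\neq n(p')$ this contradicts expansivity once $L$ exceeds the time expansivity needs to separate points at scale $\de$ under the relevant shift.

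\textbf{Step 2 (globalization via periodic gluing orbit property).} Fix any two periodic points $p,p'\in\Per(f)$. Apply the periodic gluing orbit property with $\vep=\de/2$ to the finite data $x_0=p$, $x_1=p'$ (repeating blocks if needed) and large $n_0,n_1\ge L$: this yields a periodic point $q$ whose orbit first $\de$-shadows the $p$-orbit for $n_0\ge L$ steps and later $\de$-shadows the $p'$-orbit for $n_1\ge L$ steps. By Step~1 applied to the pair $(q,p)$ and to the pair $(q,p')$ — using that $n(q)=n(f^j q)$ is constant along the $q$-orbit — we get $n(p)=n(q)=n(p')$. Since $p,p'$ were arbitrary, $n(\cdot)\equiv k$ for some fixed $k\in\mathbb Z$. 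Hence $h(p)=f^k(p)$ for every $p\in\Per(f)$, i.e. $h$ and $f^k$ agree on the dense set $\Per(f)$ (dense by Lemma~\ref{le:equidistributed}); by continuity $h=f^k$ on $\Lambda$.

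\textbf{Main obstacle.} The delicate point is Step~1: making the quantifiers in the expansivity argument genuinely uniform. One must be careful that the shadowing is two-sided and long enough on \emph{both} sides of the window, because to exploit expansivity for the pair $f^{i+n(p)}(p)$ versus $f^{i+n(p')}(p')$ one needs the orbits to stay $\de$-close on a window whose length grows with $|n(p)-n(p')|\le 2N$ and with the expansivity separation time for scale $\de$ — all of which are fixed once $N$ and $\de$ are fixed, so the argument closes, but the bookkeeping of indices and the order in which $\de$, $L$, and the gluing constant $K(\de/2)$ are chosen must be arranged coherently. A secondary subtlety is that the gluing constant $K$ inserts uncontrolled (though bounded) gaps between the shadowed blocks; this is harmless because we only need the \emph{lengths} $n_0,n_1$ of the shadowed blocks to be large, not the gaps to be small, and $n(\cdot)$ is constant along each periodic orbit so the phase shift introduced by the gaps does not affect the value of $n$.
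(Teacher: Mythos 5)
Your overall strategy (prove that $n(\cdot)$ is \emph{constant} on $\Per(f)$, then use density of $\Per(f)$ and continuity) contains a genuine gap: the intermediate claim in Step 1 is false. The quantity $n(p)$ is only well defined modulo the period $\pi(p)$, and it is normalized to lie in $]-\frac{\pi(p)}{2},\frac{\pi(p)}{2}]$; consequently $n(\cdot)$ need not be constant even when $h$ \emph{is} a power of $f$. Concretely, take $h=f^2$ where $f$ is (say) a linear Anosov automorphism with fixed point $p_0$: then $n(p_0)=0$, while $n(p)=2$ for every periodic point of period at least $4$, and there are such points whose orbits $\delta$-shadow the orbit of $p_0$ for arbitrarily many iterates. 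This refutes both the uniform local constancy of Step 1 and the conclusion of Step 2 that $n\equiv k$. The underlying analytic problem is that your expansivity mechanism only yields, from $d(f^{j+m}(q),f^j(q))<\delta$ on a window of length $L$ (with $m=n(q)-n(p)$), that $f^{m}(q)$ is \emph{close} to $q$, not that $m=0$: the window never covers a full period of the glued orbit $q$, so periodicity cannot be used to extend the estimate to all of $\mathbb Z$ and invoke expansivity outright, and upgrading ``$f^m(q)$ close to $q$'' to ``$m\equiv 0$'' requires a lower bound on the separation of the points $f^j(q)$, $0<|j|\le 2N$, which is not uniform in $q$. This is precisely why Lemma~\ref{le:continuity} carries a constant $\eta_p$ depending on $p$, and why that lemma is used in the paper only to prove boundedness of $n(\cdot)$, not constancy.

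The paper's proof of this particular lemma is much softer and sidesteps all of this. Since $|n(p)|\le N_0$ for all $p$, the set $\Per(f)$ splits into the finitely many level sets $P_j=\{p\in\Per(f):n(p)=j\}$, $|j|\le N_0$, each a union of full orbits because $n$ is constant along orbits. Lemma~\ref{le:equidistributed} provides periodic points $p_\ell$ whose orbits are $\frac1\ell$-dense, and by the pigeonhole principle some single level set $P_k$ contains infinitely many of these orbits, hence is dense in $\Lambda$. Then $h=f^k$ on the dense set $P_k$, and therefore on all of $\Lambda$ by continuity. The lesson is that density of \emph{one} level set is all that is needed; global constancy of $n$ is both unnecessary and, in general, false.
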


\begin{proof}
Assume there exists $N_0\ge 1$ so that $|n(p)| \le N_0$  for every $p\in \Per(f)$. Then it makes sense to  consider the decomposition
$\text{Per}(f) = \bigsqcup_{|j|\le N_0} \big\{ p\in \text{Per}(f\mid_\Lambda) \colon  n(p)=j \big\}.$
By Lemma~\ref{le:equidistributed}, for every $\ell\ge 1$ there exists 
a periodic point $p_\ell \in \text{Per}(f)$ that is $\frac1\ell$-dense in $\Lambda$. 
Using the previous decomposition on the space of periodic points 
and the pigeonhole principle, there exists $k \in \{-N_0, \dots, N_0\}$ so that the set
$P_{k}:=\big\{ p\in \text{Per}(f) \colon  n(p)=k \big\}$ contains infinitely many periodic points of the family $(p_\ell)_{\ell\ge 1}$.
Hence $P_{k}$ is dense in $\Lambda$ and $h\mid_{P_{k}}= f^{k}$. This implies that $h=f^k$ and proves the lemma.
\end{proof}

The remaining of the proof is to assure the hypothesis of the previous lemma is satisfied. 
We need the following estimate on $n(\cdot)$.

\begin{lemma}\label{le:continuity}
Given $p\in \text{Per}(f)$ of prime period $\pi(p)\ge 1$ there exists $\eta=\eta_p>0$ (depending on $p$, $f$ and $h$) 
so that for every $q\in \text{Per(f)} \cap B(p,\eta)$ either $n(q)=n(p)$ or $|n(q)|> \frac{\pi(p)}2$.
\end{lemma}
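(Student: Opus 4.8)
The plan is to exploit the continuity of $h$ at $p$ together with the fact that $h$ acts on the finite orbit $\mathcal O_f(p)$ exactly as $f^{n(p)}$ (indeed $h(f^j(p))=f^j(h(p))=f^{n(p)}(f^j(p))$, since $h$ commutes with $f$ and maps $\mathcal O_f(p)$ to itself), and then to compare the point $h(q)=f^{n(q)}(q)$ \emph{simultaneously} with $h(p)=f^{n(p)}(p)$ and with $f^{n(q)}(p)$. Equivalently, I will prove the contrapositive form of the dichotomy: if $q\in\Per(f)$ is close enough to $p$ and $|n(q)|\le \pi(p)/2$, then $n(q)=n(p)$.

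First I would dispose of the case $\pi(p)=1$: then $n(p)=0$ and, since $n(q)\in\mathbb Z$, the dichotomy ``$n(q)=n(p)$ or $|n(q)|>\tfrac12$'' is automatic. So assume $\pi(p)\ge 2$ and set
$$\rho_p:=\min\{\,d(f^i(p),f^j(p))\;:\;0\le i<j<\pi(p)\,\}>0,\qquad M_p:=\lceil \pi(p)/2\rceil .$$
Using the continuity at $p$ of $h$ and of the finitely many homeomorphisms $f^j$ with $|j|\le M_p$, I would choose $\eta=\eta_p>0$ so that every $q\in B(p,\eta)$ satisfies $d(h(q),h(p))<\rho_p/3$ and $d(f^j(q),f^j(p))<\rho_p/3$ for all $|j|\le M_p$. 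This is where the dependence of $\eta$ on $p$, $f$ and $h$ enters.

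Now let $q\in\Per(f)\cap B(p,\eta)$ and suppose $|n(q)|\le \pi(p)/2$, so that $|n(q)|\le M_p$. Since $f^{n(q)}(q)=h(q)$ and $f^{n(p)}(p)=h(p)$, the triangle inequality gives
$$d\big(f^{n(q)}(p),f^{n(p)}(p)\big)\le d\big(f^{n(q)}(p),f^{n(q)}(q)\big)+d\big(h(q),h(p)\big)<\tfrac{\rho_p}{3}+\tfrac{\rho_p}{3}<\rho_p .$$
As $f^{n(q)}(p)$ and $f^{n(p)}(p)$ both lie on $\mathcal O_f(p)$, which consists of exactly $\pi(p)$ points any two distinct of which are at distance $\ge\rho_p$, they must coincide; hence $f^{n(q)-n(p)}(p)=p$, i.e. $n(q)\equiv n(p)\pmod{\pi(p)}$. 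A short arithmetic check using this congruence together with $|n(q)|\le\pi(p)/2$ and the normalization $n(p)\in(-\tfrac{\pi(p)}2,\tfrac{\pi(p)}2]$ then forces $n(q)=n(p)$, which is the first alternative of the dichotomy and so proves the lemma.

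The argument carries no serious obstacle; the only point requiring care is the choice of $\eta$, which must control \emph{two} separate families of continuity estimates (for $h$ and for $f^j$) over the whole index range $|j|\le M_p$ at once. It is precisely the hypothesis $|n(q)|\le\pi(p)/2$ that makes this finite range of indices sufficient: it guarantees that $n(q)$ falls inside the window $[-M_p,M_p]$ on which $q$ is forced to shadow $p$, so that $f^{n(q)}(q)$ is close to $f^{n(q)}(p)$. Without such a bound on $n(q)$ one could not keep $q$ and $p$ close along $n(q)$ iterates, and the comparison would break down — which is exactly why the conclusion for large $|n(q)|$ is left open in the statement.
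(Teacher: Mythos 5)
Your argument is correct in substance and follows the same basic mechanism as the paper's proof: continuity of $h$ at $p$ combined with the uniform separation of the finitely many points of $\mathcal O_f(p)$ forces $f^{n(q)}(q)$, and hence $f^{n(q)}(p)$, to land on the same orbit point as $h(p)=f^{n(p)}(p)$. (The paper packages this via the auxiliary homeomorphism $\tilde h=h\circ f^{-n(p)}$, which fixes $p$, and a ball-invariance condition on the $\zeta$-neighborhoods of the orbit, rather than via continuity of the iterates $f^j$ for $|j|\le M_p$; the content is the same.) The one place you overclaim is the final ``short arithmetic check.'' From $n(q)\equiv n(p)\pmod{\pi(p)}$, $|n(q)|\le\pi(p)/2$ and $n(p)\in\,]-\pi(p)/2,\pi(p)/2]$ one only gets $n(q)-n(p)\in\{-\pi(p),0,\pi(p)\}$; the value $+\pi(p)$ is excluded by the normalization of $n(p)$, but $n(q)-n(p)=-\pi(p)$ survives as an exceptional case, namely $\pi(p)$ even, $n(p)=\pi(p)/2$ and $n(q)=-\pi(p)/2$ (which requires $\pi(q)>\pi(p)$ and is not excluded by your geometric step, since $f^{-\pi(p)/2}(p)=f^{\pi(p)/2}(p)$). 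In that case $n(q)\ne n(p)$ and only $|n(q)|=\pi(p)/2$ holds, so what your argument really establishes is the dichotomy ``$n(q)=n(p)$ or $|n(q)|\ge\pi(p)/2$.'' This is the same non-strict conclusion that the paper's own proof reaches in its last line (despite the strict inequality in the statement), and it is the form actually used in the proof of the Proposition, so the slip is harmless --- but you should either record the weaker conclusion or dispose of the exceptional case explicitly.
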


\begin{proof}
Given $p\in \text{Per}(f)$ pick $\zeta>0$ small enough such that the collection of balls $\{B(f^j(p), \zeta)\}_{j=0\dots \pi(p)-1}$ is pairwise disjoint and 
\begin{equation}\label{eq:choiceBalls}
f^k(B(f^j(p), \zeta)) \cap \Big( \bigcup_{s\neq j+k} B(f^{s}(p), \zeta) \Big) = \emptyset
\end{equation}
for every $j, k \in \{0\dots \pi(p)-1\}$.
Clearly, the homeomorphism $\tilde h := h\circ f^{-n(p)}$ belongs to $\mathcal Z^0(f)$ and $\tilde h(p)=p$. Using
that $\tilde h$ is uniformly continuous, there exists $0<\eta<\zeta/2$ so that $d(q,p)<\eta$ implies 
$
d(\tilde h (q), \tilde h(p)) = d(f^{n(q)-n(p)} (q),p)<\frac{\zeta}2.
$
Since $q$ and $\tilde h(q)=f^{n(q)-n(p)} (q)$ belong to $B(p,\eta)$ and $h$ preserves the periodic orbits of $f$ 
(hence the same holds for $\tilde h$) we conclude that either $\tilde h(q)=q$ or $\tilde h(q) \in \mathcal O_f(q) 
\setminus\{q\}$. These correspond to the cases that $n(q)=n(p)$ or, using \eqref{eq:choiceBalls}, 
that $|n(q)-n(p)| \ge \pi(p)$, respectively. This proves the dichotomy that $n(q)=n(p)$ or $|n(q)| \ge \pi(p)/2$
as claimed in the lemma.
\end{proof}

Theorem~\ref{thm:PGT} will follow as a consequence of Lemma~\ref{le:power} together with the following proposition.

\begin{proposition}
$n(\cdot) : \Per(f) \to \mathbb Z$ is bounded.
\end{proposition}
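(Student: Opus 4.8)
The plan is to argue by contradiction: suppose $n(\cdot)$ is unbounded on $\Per(f)$. Then there exists a sequence $(p_j)_{j\ge 1}$ of periodic points with $|n(p_j)| \to \infty$, and in particular $|n(p_j)| \le \pi(p_j)/2$ forces the prime periods $\pi(p_j) \to \infty$ as well. The key point will be to use Lemma~\ref{le:continuity} together with Lemma~\ref{le:equidistributed} to produce a contradiction via the periodic gluing orbit property, which allows us to build a single periodic orbit that shadows several of the $p_j$'s and a dense set of points simultaneously, and then to track the value of $n$ along the pieces of this orbit.

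First I would fix a small expansivity constant $\vep_0$ for $f$ and let $K=K(\vep)$ be the gluing constant for a scale $\vep \ll \vep_0$ chosen below. The idea is: given any target periodic point $p=p_j$ with $|n(p)|$ large, use Lemma~\ref{le:continuity} to get the radius $\eta_p>0$ such that every $q\in \Per(f)\cap B(p,\eta_p)$ has either $n(q)=n(p)$ or $|n(q)|>\pi(p)/2$. Now apply the periodic gluing orbit property to a finite collection of orbit segments: one long segment following the orbit of $p$ (so the resulting periodic point $q$ has a large arc $\vep$-shadowing $\mathcal O_f(p)$), together with short segments $\{x_1,\dots,x_k\}$ that are $\vep/2$-dense in $\Lambda$. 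This yields a periodic point $q$ whose orbit is both $\vep$-dense in $\Lambda$ and passes $\eta_p$-close to $p$ (taking $\vep<\eta_p$). By Lemma~\ref{le:continuity}, $n(q)\in\{n(p)\}\cup\{m : |m|>\pi(p)/2\}$, so in either case $|n(q)|$ is forced to be large — of order at least roughly $\pi(p)/2$ or exactly $n(p)$.

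The heart of the argument is then the contradiction. Because we can make $q$'s orbit $\vep$-dense for $\vep$ as small as we like, $q$ is $\eta_{p'}$-close to \emph{every} periodic point $p'$ in a prescribed finite list, not just to $p$. Concretely, I would first choose a periodic point $p'$ with \emph{small} prime period $\pi(p')$ (such points exist and are dense by Lemma~\ref{le:equidistributed}, e.g. a fixed point or low-period orbit produced by the gluing property), get its radius $\eta_{p'}$, and then build $q$ with orbit $\vep$-dense for $\vep<\min\{\eta_p,\eta_{p'}\}$ and simultaneously $\eta_p$-close to a large-$n$ point $p$. Applying Lemma~\ref{le:continuity} at $p'$: since $q$ is $\eta_{p'}$-close to $p'$, either $n(q)=n(p')$ (so $|n(q)|\le \pi(p')/2$, a \emph{small} bound) or $|n(q)|>\pi(p')/2$. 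Combining with the lower bound on $|n(q)|$ coming from $p$, the first alternative is impossible once $|n(p)|$ (hence the forced lower bound on $|n(q)|$) exceeds $\pi(p')/2$. So we are always in the regime $|n(q)|>\pi(p')/2$ for \emph{every} small-period $p'$ we feed in. Iterating this — feeding in more and more periodic points with controlled periods, all $\eta$-close to the same constructed $q$ — pins $n(q)$ to a single integer while simultaneously forcing it to avoid intervals of the form $[-\pi(p')/2,\pi(p')/2]$ in a way that, by a pigeonhole/counting argument analogous to Lemma~\ref{le:power}, is incompatible with $q$ having only one value of $n$ on its orbit. More cleanly: the orbit of $q$ contains points close to many $p'$ of bounded period, and Lemma~\ref{le:continuity} applied at each such $p'$ forces $n(q)$ to equal each $n(p')$ or be huge; since $q$ has a single value $n(q)$ on its whole orbit and the $n(p')$ take at least two distinct values among bounded-period orbits (or else $h$ would already be a single power on a dense set, giving $h=f^k$), we reach a contradiction.

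The step I expect to be the main obstacle is the bookkeeping in the gluing construction: ensuring that the periodic point $q$ produced by the periodic gluing orbit property genuinely spends a long enough time near $\mathcal O_f(p)$ that the closeness hypothesis of Lemma~\ref{le:continuity} applies to $q$ itself (not merely to some iterate), and that the transition times $p_0,\dots,p_k\le K(\vep)$ do not destroy the relation $n(q)=n(f^jq)$ that we rely on. Here I would exploit that $n$ is constant along orbits, so it suffices that \emph{some} iterate $f^i(q)$ lies in $B(p,\eta_p)$; expansivity then controls how the shadowing segment propagates. A secondary technical point is verifying that $\eta_p$ can be taken uniform enough — or rather, that we only ever need finitely many $\eta_{p'}$ at a time — so that a single scale $\vep$ works for the whole finite configuration fed into the gluing property. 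Once these are in place, the contradiction with Lemma~\ref{le:power}'s dichotomy (bounded $\Rightarrow$ $h=f^k$) completes the proof of the proposition, and hence of Theorem~\ref{thm:PGT}.
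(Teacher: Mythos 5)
Your proposal correctly reproduces half of the paper's strategy — use the gluing property to build a periodic point $q$ that shadows a long piece of the orbit of a point $p$ with $|n(p)|$ huge, so that some iterate of $q$ falls in $B(p,\eta_p)$ and Lemma~\ref{le:continuity} forces $|n(q)|$ to be large — but the contradiction you then try to extract does not materialize. Lemma~\ref{le:continuity} only ever produces the alternative ``$n(q)=n(p')$ or $|n(q)|>\pi(p')/2$''; once $|n(q)|$ is known to be large, feeding in additional reference points $p'$ of \emph{small} period gives no information at all, because the second branch $|n(q)|>\pi(p')/2$ is then automatically satisfied. In other words, every application of the lemma yields a lower bound (or an exact pinning) for $|n(q)|$, and you never produce an upper bound for $|n(q)|$ to clash with. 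The only a priori upper bound is $|n(q)|\le \pi(q)/2$, and the period of the glued orbit is not controlled well enough for that to help: the time $k_3$ needed for the dynamical ball around the large-$n$ point to shrink below $\eta$ bears no relation to $|n|$ at that point. The closing claim that the values $n(p')$ on bounded-period orbits ``take at least two distinct values, or else $h=f^k$'' is also not justified — orbits of period at most $N$ need not be dense — and even if two distinct values occur, $n(q)$ failing to equal one of them merely re-triggers the harmless second branch.

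The paper closes the argument with a genuinely different, geometric ingredient that your proposal is missing. Fix two distinct periodic orbits $\mathcal O_f(p_1)$, $\mathcal O_f(p_2)$ and shrink $\vep$ so that their $\vep$-neighborhoods $V_1,V_2$ satisfy $\dist_H\big(V_1\cup h(V_1)\cup h^{-1}(V_1),\,V_2\cup h(V_2)\cup h^{-1}(V_2)\big)>0$. The glued periodic point (call it $p$, as in the paper) is made to spend the overwhelming majority of its period, namely $k\pi(p_1)$ iterates with $k$ huge, inside $V_1$, then to visit $V_2$ at the specific time $z=f^{t_1+k\pi(p_1)}(p)$, and only then to pass near the large-$n$ point $p_3$, so that Lemma~\ref{le:continuity} forces $K(\vep)<|n(p)|\le \pi(p)/2\le k\pi(p_1)$. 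These inequalities place $f^{-|n(p)|}(z)$ in the long stretch of the orbit lying in $V_1$, while $h(z)=f^{n(p)}(z)$ means that either $h(z)$ or $h^{-1}(z)$ equals $f^{-|n(p)|}(z)$; since $z\in V_2$, that same point lies in $h(V_2)\cup h^{-1}(V_2)$, contradicting the separation. It is this interplay between the fixed homeomorphism $h$, which cannot carry $V_2$ onto $V_1$, and the long-time drift of $f^{n(p)}$ along the glued orbit that produces the contradiction; without it, the dichotomy of Lemma~\ref{le:continuity} alone cannot bound $n(\cdot)$, so the proposal as written has a genuine gap at its central step.
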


\begin{proof}
Assume by contradiction that $n(\cdot) : \Per(f) \to \mathbb Z$ is unbounded.
As $f$ is expansive let $\vep>0$ be an expansivity constant for $f$.  Given $p_1, p_2\in \Per(f)$ 
we have that $h(\mathcal O_f(p_i))=\mathcal O_f(p_i)$ ($i=1,2$). Hence, diminishing $\vep>0$
if necessary, we can assume the $\vep$-neighborhood $V_i$ of the orbit $\mathcal O_f(p_i)$ ($i=1,2$) 
to satisfy 
\begin{equation}\label{eq:distH}
\dist_H ( V_1 \cup h(V_1) \cup h^{-1}(V_1) , V_2 \cup h(V_2)\cup h^{-1}(V_2) ) >0,
\end{equation}
where $\dist_H$ denotes the Hausdorff distance. Let $K(\vep) >0$ be given by the periodic gluing orbit property.

Let $p_3 \in \Per(f)$ such that $|n(p_3)| > 2 K(\vep)$; such a point does exist because $n(\cdot)$ is unbounded.
By definition of $n(\cdot)$ we get that $2 K(\vep) \le |n(p_3)| \le \frac{\pi(p_3)}2$.

Since $\vep$ is assumed to be an expansivity constant for $f$ we conclude the diameter of the dynamic ball
$$
B(p_3, k,\vep):=\{ x\in M \colon d(f^j(x), f^j(p_3))<\vep \; \text{ for every } -k \le j \le k\}
$$
tends to zero as $k\to\infty$. 
Let $\eta=\eta_{p_3}>0$ be given by Lemma~\ref{le:continuity} and $k_3\ge 1$ be so that 
$\diam(B(p_3, k_3,\vep))<\eta$.
Given $k\ge 1$ arbitrary, the periodic gluing orbit property assures the existence of a periodic point $p \in \Per(f)$ and times $0\le t_1, t_2, t_3 \le K(\vep)$
so that:
\begin{enumerate}
\item $d(f^j(p), f^j(p_1))\le \vep$ for every $0\le j \le k \pi(p_1)$ 
\item $d(f^{j+t_1+k \pi(p_1)} (p), f^j(p_2))\le \vep$ for every $0\le j \le \pi(p_2)$, 
\item $d(f^{j+t_2+\pi(p_2) + t_1+k \pi(p_1)} (p), f^j(p_2))\le \vep$ for every $0\le j \le k_3 \pi(p_3)$, and 
\item $\pi(p)=t_3+\pi(p_3)+t_2+\pi(p_2) + t_1+k \pi(p_1)$.
\end{enumerate} 
In particular, one can choose $k\gg 1$ so that $k\pi(p_1) \ge \max\{3K(\vep) +\pi(p_2)+k_3 \pi(p_3), |n(p_3)|\}$.
Since the orbit of $p$ intersects the $\eta$-neighborhood of the orbit of $p_3$ 
and $n(f^j(p_3))=n(p_3)$ for every $0\le j \le \pi(p_3)-1$,
Lemma~\ref{le:continuity} assures that $|n(p)|> K(\vep)$.
By items (1) and (2), the point $z=f^{t_1+k \pi(p_1)} (p)$  belongs to $V_2$ and $f^{-j}(z) \in V_1$ for every 
$t_1 \le j \le k\pi(p_1)$. Moreover, by the choice of $k\ge 1$, we have that $f^{-|n(z)|}(z)=f^{-|n(p)|}(z) \in V_1$.
As either $h(p)=f^{-|n(z)|}(z)$ or $h^{-1}(p)=f^{-|n(z)|}(z)$ this leads to a contradiction with \eqref{eq:distH}.
Thus $n(\cdot) : \Per(f) \to \mathbb Z$ is bounded, which proves the proposition.
\end{proof}

\section{Trivial centralizers on hyperbolic basic pieces of $C^r$-generic diffeomorphisms}\label{sec:proofs}

Using that hyperbolic basic pieces admit analytic continuations $g \mapsto \Lambda_g$, Corollaries~\ref{cor:0}
and ~\ref{cor:A} on the triviality of the centralizer of $C^r$-generic diffeomorphisms on hyperbolic basic pieces 
are consequences of Lemma~\ref{le:periodic-spectra} below.
First we establish some notation. Given a $C^1$-diffeomorphism $f$ on a compact Riemannian manifold $M$ and 
$p\in \text{Per}(f)$, let $\sigma(Df^{\pi(p)}(p)) \subset \mathbb C$ denote the spectrum of the linear transformation 
$Df^{\pi(p)}(p): T_p M \to T_p M$, where $\pi(p)\ge 1$ is the prime period of $p$.

\begin{lemma}\label{le:periodic-spectra}
Let $1\le r\le \infty$, $f_0\in \text{Diff}^{\,r}(M)$ and $\Lambda_{f_0} \subset M$ be a maximal invariant set for $f_0$. 
Let $\mathcal U \subset \text{Diff}^{\,r}(M)$ be an open neighborhood of $f_0$ and let
$U \subset M$ be open set such that the analytic continuation of the hyperbolic basic sets ~\eqref{eq:analytic}
is well defined. Then, there exists a $C^r$-residual subset $\mathcal R \subset \mathcal U$ such that for any
$f\in \mathcal R$ the following holds: $Df^{\pi(p)}(p)$ is not linearly conjugated to $Df^{\pi(q)}(q)$ for any periodic 
points $p,q$ so that $p\notin \mathcal O_f(q)$.
\end{lemma}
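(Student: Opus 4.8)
The plan is to show that for each pair of "spectral types" the set of diffeomorphisms realizing a coincidence of derivative-conjugacy-classes along two \emph{distinct} periodic orbits is contained in a closed set with empty interior, and then take a countable intersection of complements. The natural way to organize the countable structure is to fix, for each $N\ge 1$, the (finitely many, in the relevant open set $\mathcal U$, after shrinking) periodic orbits of period at most $N$ of the hyperbolic continuation $\Lambda_f$; by hyperbolic continuity these depend continuously (indeed as smoothly as $f$) on $f\in\mathcal U$, and their number is locally constant. So it suffices to prove: for each fixed pair of distinct periodic orbits $\mathcal O_f(p)$, $\mathcal O_f(q)$ (of periods $\pi(p),\pi(q)\le N$) of the continuation, the set
\[
\mathcal D_{p,q}=\{\, f\in\mathcal U : Df^{\pi(p)}(p)\ \text{is linearly conjugate to}\ Df^{\pi(q)}(q)\,\}
\]
is closed with empty interior in $\mathcal U$; then $\mathcal R=\mathcal U\setminus\bigcup_{N}\bigcup_{p,q}\mathcal D_{p,q}$ is residual and works. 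Closedness follows because the map $f\mapsto\big(Df^{\pi(p)}(p),Df^{\pi(q)}(q)\big)$ is continuous and "being linearly conjugate" is a closed condition on pairs of matrices (same rational canonical form, which is cut out by closed conditions on eigenvalues-with-multiplicities; one can phrase it via equality of characteristic polynomials together with equality of the ranks of $(A-\lambda)^j$ at the common roots, or more simply: the conjugacy-class closure argument — the limit of conjugates is conjugate to something in the closure, but for hyperbolic periodic points of a fixed index the Jordan structure is semicontinuous and one gets closedness on the nose; I would just invoke that linear conjugacy classes are closed in a neighborhood where the index is fixed).

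The heart of the matter is the empty-interior claim, and here is where one uses a perturbation argument. Fix $f_0\in\mathcal D_{p,q}$; I want to produce arbitrarily $C^r$-small perturbations leaving $\mathcal D_{p,q}$. The standard tool is a Franks/Moser-type local perturbation lemma: given a hyperbolic periodic point $p$ of $f$ with orbit disjoint from a compact set, and given any linear map $L$ that is $C^0$-close to $Df^{\pi(p)}(p)$ (in a coordinate chart around $p$), there is a $C^r$-small perturbation $\tilde f$ of $f$, supported in an arbitrarily small neighborhood of $\mathcal O_f(p)$ disjoint from $\mathcal O_f(q)$, with $\tilde f=f$ on $\mathcal O_f(p)$ and $D\tilde f^{\pi(p)}(p)=L$; moreover if $\|L-Df^{\pi(p)}(p)\|$ is small the hyperbolic continuation persists and $\tilde f$ still lies in $\mathcal U$. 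Because the perturbation is supported away from $\mathcal O_f(q)$, we have $D\tilde f^{\pi(q)}(q)=Df^{\pi(q)}(q)$ unchanged. So it remains to choose $L$ arbitrarily close to $Df_0^{\pi(p)}(p)$ and \emph{not} linearly conjugate to $Df_0^{\pi(q)}(q)$: this is an elementary linear-algebra fact (the linear-conjugacy class of a single matrix has empty interior in $GL(n,\mathbb R)$ — in fact in the space of $n\times n$ matrices — since, e.g., one can perturb an eigenvalue by an arbitrarily small real amount, or perturb within a Jordan block, to leave the orbit; the orbit is a submanifold of positive codimension). Hence $\tilde f\notin\mathcal D_{p,q}$, proving $\mathcal D_{p,q}$ has empty interior.

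I expect the main obstacle to be the bookkeeping of the perturbation lemma in regularity $r$ for arbitrary finite $r$ and in the \emph{conservative} case needed for Corollary~\ref{cor:A}. The $C^r$ realizability of a prescribed derivative at a periodic point is classical (bump-function interpolation in a chart, controlling $C^r$-size by the $C^0$-size of $L-Df^{\pi(p)}(p)$ and the shrinking support), but one must be careful that (i) the support can be taken disjoint from the other orbit and from $U$'s boundary so the continuation $\Lambda_{\tilde f}$ is still defined and contained in the region where \eqref{eq:analytic} makes sense, and (ii) in the volume-preserving category one needs a Moser-type argument to keep the perturbation in $\operatorname{Diff}^r_m(M)$, which constrains $L$ to lie in the appropriate subgroup ($\det L=1$ for $f_A$) — but the linear-conjugacy class of a matrix still has empty interior inside $SL(n,\mathbb R)$, so the argument survives. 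A secondary, purely notational point: one should phrase the closedness of the linear-conjugacy relation carefully, since conjugacy classes in $GL(n,\mathbb R)$ are not closed in general; the clean fix is to note that among matrices of a fixed "hyperbolic index" (a fixed splitting into contracting/expanding parts, which is what hyperbolic continuation gives), the relevant conjugacy classes \emph{are} closed, or alternatively to redefine $\mathcal D_{p,q}$ as the closed set where the two characteristic polynomials and all the rank-invariants $\operatorname{rank}(Df^{\pi(p)}(p)-\lambda I)^j=\operatorname{rank}(Df^{\pi(q)}(q)-\lambda I)^j$ agree, which is exactly linear conjugacy and is manifestly closed.
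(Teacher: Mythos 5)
Your overall strategy coincides with the paper's: organize the statement as a countable intersection over (pairs of) periodic orbits of bounded period of the hyperbolic continuation, and for density perform a Franks-type local perturbation of the derivative at one point of one orbit, supported in a small ball disjoint from the other orbits and from the boundary of $U$, with the $C^r$ norm of the perturbation controlled by the $C^0$ distance of the new linear part to the old one once the support radius is fixed (the paper makes this explicit via Fa\`a di Bruno), and with the rotation/Moser adaptation for the conservative case relegated to a remark. So the perturbative heart of your argument is the paper's argument.

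The one step that fails as written is the closedness of $\mathcal D_{p,q}$, and neither of your proposed repairs works. The relation ``$A$ is linearly conjugate to $B$'' is not closed on pairs even after fixing the hyperbolic index: the pairs $\bigl(\left(\begin{smallmatrix}2&1/n\\0&2\end{smallmatrix}\right),\left(\begin{smallmatrix}2&1\\0&2\end{smallmatrix}\right)\bigr)$ are conjugate for every $n\ge 1$ but converge to a non-conjugate pair of expanding matrices; and the rank-invariant reformulation is not manifestly closed either, because rank is only lower semicontinuous, so equality of ranks can be destroyed in the limit (as it is in this example). The gap is harmless because your own perturbation proves more than non-conjugacy: it separates the \emph{spectra} of $Df^{\pi(p)}(p)$ and $Df^{\pi(q)}(q)$. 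The clean bookkeeping — and the one the paper uses — is therefore to take as the generic condition the disjointness of these spectra (which implies non-conjugacy): since each spectrum is a compact set depending continuously on $f$ in the Hausdorff metric, disjointness is an open condition, and your density argument applies verbatim, e.g.\ by composing the derivative at one point of the orbit with a homothety $\lambda\,Id$ with $\lambda$ close to $1$ so that the finitely many spectra involved are moved onto circles of different radii. With that substitution (the paper indexes its open dense sets $\mathcal R_n$ by the period bound $n$ rather than by pairs of orbits, which is an equivalent countable decomposition), your proof is the paper's proof.
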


\begin{proof}
Fix $r\ge 1$ arbitrary. For every $n\ge 1$ it is enough to prove that the set 
\begin{align*}
\mathcal R_n :=\big\{  f\in \cU \colon &  \sigma(Df^{\pi(p)}(p)) \cap \sigma(Df^{\pi(q)}(q)) =\emptyset 
	 \;\text{for every} \; p,q \in \text{Per}_{\le n}(f), p\notin \mathcal O_f(q)  \big\}
\end{align*}
forms a $C^r$-open and dense subset of $\cU$ (hence $\mathcal R=\bigcap_{n\ge 1} \mathcal R_n$
is the desired residual subset of $\cU$).

Fix $n\ge 1$. By the analytic continuation of the hyperbolic basic sets, 
every periodic point $p\in \text{Per}(f_0)$ admits an analytic continuation 
$p_f \in \text{Per}(f)$ for every diffeomorphism $f\in \mathcal U$ and $\pi(p_f)=\pi(p)$. 
Moreover, since the spectrum $\sigma(Df^{\pi(p)}(p)) \subset \mathbb C$ is compact and the map
$f\mapsto \sigma(Df^{\pi(p)}(p_f))$ is continuous (in the Hausdorff distance) 
then it is clear that $\mathcal R_n$ is a $C^r$-open subset in $\mathcal U$.

We proceed to prove that $\mathcal R_n \subset \cU$ is $C^r$-dense. The proof consists of showing that whenever 
the spectrum of the derivative at periodic points of period smaller or equal to $n$ intersect, one can perform a finite 
number of arbitrarily small $C^r$-perturbations with disjoint supports (which resemble the ones considered by Franks~\cite[Lemma~1.1]{Franks}) and to obtain an element of $\mathcal R_n$. 
Take $f\in \cU$ and $\vep>0$ arbitrary.
Since there are finitely many periodic orbits of period smaller or equal to $n$ in $\Lambda_f$, 
choose periodic points $(p_i)_{i=1}^\ell$
in such a way that every periodic point in $\text{Per}_{\le n}(f)$ belongs to the orbit of exactly one of the points $p_i$,
 for some $1\le i \le \ell$.
Pick $\delta>0$ small so that the exponential map $\exp_{x}: B(0,\delta) \subset T_x M \to B(x,\delta)$ is a $C^\infty$ 
diffeomorphism for every $x\in M$, and so that the family of open balls $\{ B(p_i,\delta) : 1\le i \le \ell\}$ on $M$ is pairwise disjoint 
and every ball contains exactly one point of every periodic orbit of period smaller or equal to $n$.
We proceed recursively on $2\le i \le \ell$. Set $g_1=f$. If $\sigma(Dg_1^{\pi(p_1)}(p_1)) \cap \sigma(Dg_1^{\pi(p_2)}(p_2))
= \emptyset$ then take $g_2=g_1$, otherwise take
\begin{equation}
g_{2,A_2}:=\begin{cases}
	\begin{array}{cl}
	g_1(x) & , \text{ if } x\notin  B(p_2,\delta) \\
	\exp_{f(p_2)}\circ [ (1- \beta) \hat g_1 + \beta (\hat g_1 \cdot A_2)] \circ \exp_{p_2}^{-1} & , \text{ if } x\in  B(p_2,\delta) 
	\end{array}
	\end{cases}
\end{equation}
where $\hat g_1 : B(0,\delta) \to B(0,\delta)$ is given by $\hat g_1= \exp_{g_1(p_2)}^{-1} \circ \; g_1 \circ \exp_{p_2}$,
the map $\beta : B(0, 2\delta) \to [0,1]$ is a $C^\infty$ bump function so that $\beta\mid_{B(0, 2\delta) \setminus B(0,\delta)} \equiv 0$
and $\beta\mid_{B(0, \delta/2)}\equiv 1$, and $A_2$ is $C^\infty$-small perturbation of the identity map.
It is clear from the construction that $g_{2,A_2}$ is a $C^r$-diffeomorphism, coincides with $g_1$ 
outside of the ball $B(p_2,\delta)$ and  $\text{Per}_{\le n}(g_{2,A_2})=\text{Per}_{\le n}(g_1)$.
Moreover, by Fa\`{a} di Bruno's formula, given $q\in B\left(p,\delta\right)$, we have
\begin{equation}\label{FDB}
\frac{\partial^n \beta(\|q\|^2)}{\partial q_k}
	=\sum\frac{n!}{m_1!1!^{m_1}m_2!2!^{m_2}...m_n!n!^{m_n}}\beta^{(m_1+...+m_n)}(\|q\|^2)
	\prod_{j=1}^n \left(\frac{\partial^j \|q\|^2}{\partial q_k}\right)^{m_j},
\end{equation}
where the sum is over all vectors with nonnegative integers entries $(m_1,..., m_n)$ such that we have 
$\sum_{j=1}^n j.m_j=n$. These derivatives are bounded above by a constant $C$ (depending on $\delta$).
Hence $g_{2,A_2}$ can be taken $\vep$-$C^r$ arbitrarily close to $f$ provided 
the map $A_2$ is sufficiently $C^\infty$ close to the identity map. More precisely,
\begin{equation}\label{eq:P}
	\text{there exists $\zeta>0$ so that:} \;\;  \|A_2-Id\|_{C^\infty}<\zeta \Rightarrow \|g_{2,A_2} - g_1\|_{C^r}<\vep. 
\end{equation}
Furthermore, by construction 
\begin{equation}\label{eq:formula}
Dg_{2,A_2}(p_2)= D (\hat g_1\cdot A_2)(p_2).
\end{equation}
Taking $A_2=\lambda_2 Id$ we deduce that
$
\sigma(Dg_{2,A_2}(p_2)) =\{ \lambda_2  \lambda \in \mathbb C \colon \lambda \in \sigma(Dg^{\pi(p_2)}_{1}(p_2))\}.
$
Thus, there exists $\lambda_2$ arbitrarily close to one so that $g_{2,A_2}$ is $\vep$-$C^r$-close to $g_1$
and the spectrum of $Dg^{\pi(p_2)}_{1}(p_2))$ and $Dg^{\pi(p_2)}_{2}(p_2))$ lie in circles of different radius in $\mathbb C$ and, hence, cannot intersect.
Proceeding recursively for every $2<i<\ell$, we construct a $\vep$-$C^r$-perturbation $g_{i+1}:=g_{i+1,A_{i+1}}$ 
of the diffeomorphism $g_i$
(supported on $B(p_{i+1}, \delta)$) 
in such a way that $\text{Per}_{\le n}(g_{i+1,A_{i+1}})=\text{Per}_{\le n}(g_1)$
and 
$\sigma(Dg_i^{\pi(p_s)}(p_s)) \cap \sigma(Dg_{i+1}^{\pi(p_{i+1})}(p_{i+1})) = \emptyset$
for every $1\le s \le i$.
Then $g=g_{\ell}$ is a $C^r$-diffeomorphism that is $\vep$ $C^r$-close to $f$ and satisfies the requirements of the lemma.
Since $\vep$ was chosen arbitrary this proves that $\mathcal R_n$ is $C^r$-dense and finishes the proof of the lemma.
\end{proof}

\begin{remark}
Given $1\le r \le \infty$, the previous arguments also yield  a similar result among the classes of $C^r$ volume preserving 
diffeomorphisms using perturbations $A_2$ $C^\infty$ close to the identity map obtained as a finite composition of 
small rotations instead of homothetic perturbations in ~\eqref{eq:P} and \eqref{eq:formula} above
(see e.g. \cite[Proposition~7.4]{BDP} and \cite{LLS} for the harder problem of $C^1$-realization of derivatives on the conservative Franks lemma). 
\end{remark}

\subsection{Proof of Corollary~\ref{cor:A}}
Fix $1\le r \le \infty$. We will prove the corollary in the case of Anosov diffeomorphisms in 
$\cA^r(M)$ (the proof for volume preserving diffeomorphisms $\cA_m^r(M)$ is completely analogous).
Note that $M$ is a hyperbolic basic piece for any Anosov diffeomorphism in $\cA^r(M)$ 
(due to transitivity). 

Let $\mathcal D$ be a countable $C^r$-dense set of Anosov diffeomorphisms in $\cA^r(M)$. 
By structural stability, given $f\in \mathcal D$ the topological class of $f$ contains a $C^r$ open neighborhood 
$\cU_f \subset \text{Diff}^{\, r}(M)$ of $f$. By Lemma~\ref{le:periodic-spectra}, there exists a $C^r$-residual subset 
$\mathcal R_f \subset \cU_f$ of Anosov diffeomorphisms so that  $Dg^{\pi(p)}(p)$ is not linearly conjugated to 
$Dg^{\pi(q)}(q)$ for any $g\in \mathcal R_f$ and any periodic 
points $p,q \in \Per(g)$ so that $p\notin \mathcal O_g(q)$.

We claim that every $g\in \mathcal R_f$ has trivial $C^1$-centralizer.
Indeed, given $g\in \mathcal R_f$ and $h\in \mathcal Z^1(g)$ it is enough  to show that $h$ preserves periodic orbits
(cf. Theorem~\ref{thm:PGT}).
But, using $h\circ g^n = g^n\circ h$ for every $n\in\mathbb Z$, if $p$ is any periodic point of prime period $n=\pi(p)$ 
then 
$
Dg^{\pi(p)}(h(p)) = Dh ( p )\cdot \,Dg^{\pi(p)}(p) \cdot\, [Dh(p)]^{-1}.
$
This shows that $Dg^{\pi(p)}(h(p))$  and $Dg^{\pi(p)}(p)$ are linearly conjugated. 
By the construction of the residual subset $\mathcal R_f$ we conclude that 
 $h$ preserves the periodic orbits of $g$ and, consequently,
the $C^1$-centralizer of every $g\in \mathcal R_f$ is trivial.
Then, $\mathcal R:= \bigcup_{f \in \mathcal D} \mathcal R_f$ 
is a $C^r$-residual subset of $\cA^r(M)$ that satisfies the requirements of the corollary.
This proves Corollary~\ref{cor:A}.

\section{Anosov diffeomorphisms with non-trivial $C^1$-centralizer}\label{sec:example}

Here we make the construction of a linear hyperbolic automorphism satisfying the requirements
of Example~\ref{thm:C}.
Consider the three-dimensional torus $\mathbb T^3 = \mathbb R^3/ \mathbb Z^3$ and the matrix 
$A \in \text{SL}(3,\mathbb Z)$ given by 
$$
A = 
\left(
\begin{array}{ccc}
 0 & 1 & 1 \\ 
 2 & 1 & 0 \\
 1 & 0 & -1
\end{array}
\right)
$$ 
The characteristic polynomial is $p(\lambda)= -\lambda^3 + 4\lambda +1$, which has three real zeros 
$\lambda_1 < \lambda_2 < 0 < \lambda_3$ with $|\lambda_1|, |\lambda_3|>1$ and $|\lambda_2|<1$.
Indeed, the eigenvalues of $A$ are $\lambda_1 \in \, ]-1.87,-1.86[$, $\lambda_2 \in \, ]-0.26, -0.25[$ and 
$\lambda_3 \in \, ] 2.11,2.12[$. 
 Thus $A$
induces a linear Anosov diffeomorphism $f_A$ on $\mathbb T^3$ with a $Df_A$-invariant splitting
$T \mathbb T^3 = E^s_A \oplus E^u_A$ with $\dim E_A^s=1$ and $\dim E_A^u=2$.
Since $f_A$ is Anosov, hence expansive, it follows that the centralizer $Z^0(f_A)$ is discrete \cite{Wa70} (hence
$Z^1(f_A)$ is discrete). We proceed to
analyze the triviality of the $C^1$-centralizer.

It is not hard to check from algebraic manipulations, that we omit, that
$B\in \text{SL}(3,\mathbb Z)$ is so that $AB=BA$ if and only if $B$ is of the form
$$
B = 
\left(
\begin{array}{ccc}
c_1 + 2 c_2 + c_3 & c_1 + 2 c_2 &  c_1 \\ 
2 c_1 + 4 c_2  & 2 c_1 + 3 c_2 + c_3 & 2c_2  \\
c_1  & c_2 & c_3
\end{array}
\right)
$$ 
for $c_1, c_2, c_3\in \mathbb Z$ under the restriction that $|\det B|=1$.
In particular, taking $c_1=c_3=1$ and $c_2=0$ we obtain the matrix
$$
B = 
\left(
\begin{array}{ccc}
2 & 1 & 1   \\ 
2 & 3 & 0 \\
1 & 0 & 1 
\end{array}
\right)
\in SL(3, \mathbb Z)
$$ 
which is hyperbolic and commutes with $A$. 
The matrix $B$ has eigenvalues $0<\mu_1 <1 <\mu_2< \mu_3$, 
thus there exists a $Df_B$-invariant splitting
$T \mathbb T^3 = E^s_B \oplus E^u_B$ with $\dim E_B^s=1$ and $\dim E_B^u=2$.
By direct computations we get that the following three quocients (relative to eigenvalues along the 
same one-dimensional eigenspaces)
$$
\frac{\log |\mu_1| }{\log |\lambda_1| } \approx -3.26
\quad
\frac{\log |\mu_2| }{\log |\lambda_2| } \approx -0.41
\quad\text{and}\quad
\frac{\log |\mu_3| }{\log |\lambda_3| } \approx 1.89
$$
are pairwise distinct. Since $A$ and $B$ have the same one-dimensional eigenspaces, 
there are no non-zero integers $m,n$ such that $A^n=B^m$.

\section{Centralizers of Anosov diffeomorphisms and positive entropy elements}

\subsection{Proof of Theorem~\ref{thm:measures}}
Assume that the homeomorphism $f \in \Homeo$ has  finite topological entropy and that the entropy map 
$\mu \mapsto h_\mu(f)$ is upper semicontinuous: given invariant measures so that 
$\lim_{n\to\infty} \mu_n = \mu$ (in the weak$^*$ topology) it holds that $\limsup_{n\to\infty} h_{\mu_n}(f) \le h_\mu(f)$.
Under these assumptions $f$ has at least one equilibrium state for every continuous potential $\phi$ (see e.g. \cite{Wa}).

Fix an arbitrary $g\in \mathcal Z^0(f)$.  As $g\circ f = f\circ g$ then $g_*\circ f_* = f_*\circ g_*$ 
where the push-forward dynamics $f_* : \cM(M) \to \cM(M)$ is given by $(f_*\eta)(A)= \eta ( f^{-1} A)$ for every 
Borelian $A$ in $M$ (and $g_*$ is defined similarly). 
Let $\mu$ be a maximal entropy measure for $f$.
It is not hard to check from the commutativity of $f$ and $g$ that
$g_* \mu$ is an $f$-invariant probability measure and 
that  $h_{g_* \mu} (f) =h_{\mu} (f)$.  The first is a direct consequence of the fact that $g_*\circ f_* = f_*\circ g_*$.
The second follows as, for every finite partition $\cP$ on $M$, 
\begin{align*}
h_{g_*\mu} (f, \cP)
	 & = \inf_{n\ge 1} \frac1n H_{g_* \mu} \big( \bigvee_{\ell=0}^{n-1} f^{-\ell}(\cP) \big) 
	 = \inf_{n\ge 1} \frac1n H_{\mu} \big( g^{-1} \big(\bigvee_{\ell=0}^{n-1} f^{-\ell}(\cP) \big) \big)\\
	& = \inf_{n\ge 1} \frac1n H_{\mu} \big( \bigvee_{\ell=0}^{n-1} f^{-\ell}(g^{-1} (\cP)) \big) 
	 = h_{\mu} (\, f, g^{-j} (\cP) \,).
\end{align*}
Since the partitions $\cP$ (resp. $g^{-j} (\cP)$) can be taken with arbitrarily small diameter
then we conclude that $h_{g_* \mu} (f) =h_{\mu} (f)$, as claimed. 
Recursively,  $h_{g_*^j \mu} (f) =h_{\mu} (f)$ for every $j\ge 0$.
As the measure theoretic entropy function is affine we get that  
the $f$-invariant probability measures 
\begin{equation}\label{eq:Cesaro}
\mu_n := \frac1n \sum_{j=0}^{n-1} g_*^j \mu
\end{equation}
lie in the closed simplex $\mathcal S :=\{ \eta \in \cM_f(M) \colon h_{\eta}(f)= h_{\mu}(f)\}$
for every $n\ge 1$.
Since both $f_*$ and $g_*$ are continuous, if $\mu_\infty$ is an accumulation point of
the sequence $(\mu_n)_n$ then 
${\mu_\infty} \in \cM_f(M) \cap \cM_1(g)$. Moreover,
$
h_{\mu_\infty}(f) \ge \limsup_{n\to\infty} h_{\mu_n}(f) = h_{\mu}(f)
$
by the upper semicontinuity of the entropy map $\cM_f(M) \ni \eta \mapsto h_\eta(f)$.
In consequence $\mu_\infty$ is a maximal entropy measure for $f$ that is preserved by both 
$f$ and $g$. In consequence $h_{\text{top}}(f) = \sup_{\mu \in \cM_f(M) \cap \cM_1(g)} h_\mu(f)
=h_{\mu_\infty}(f).$
This proves the first statement in the theorem.

Finally, if $\mathcal Z^0(f)$ is finitely generated and $G=\{g_1, \dots, g_k\}$ is a set of generators 
then the previous argument shows that every push-forward $(g_{i+1})_*$ preserves the simplex 
$\mathcal S_+=\{ \eta \in \cM_f(M) \colon h_{\eta}(f)= \htop(f)\}$ for every $1\le i \le k$. Then, if
$\mu$ is a maximal entropy measure for $f$ then any accumulation point of the probability measures
$$
\frac1{n^k} \sum_{j_1=0}^{n-1} \dots \sum_{j_k=0}^{n-1}  (g_1)_*^{j_1} \dots  (g_k)_*^{j_k} \mu
$$
is a maximal entropy measure for $f$ that is preserved by all elements in $\mathcal Z^0(f)$. This finishes the
proof of the theorem.

\begin{remark}
If $g\in \mathcal Z^0(f)$ and $\mathcal E_\phi \subset \cM_f(M)$ denotes the space of equilibrium states 
for $f$ with respect to a continuous potential $\phi$, 
similar computations as above yield that any accumulation point $\mu_\infty$ of the sequence of 
invariant probability measures defined by the Cesaro averages~\eqref{eq:Cesaro} (taking 
$\mu\in \cE_\phi$ instead of a maximal entropy measure) satisfies 
$
h_{\mu_\infty}(f) + \int \phi \,d\mu_\infty
	\ge  h_{\mu}(f) + \int \phi \,d\mu_\infty,
$
where
$
\int \phi \,d\mu_\infty 
	=\lim_{k\to\infty} \frac1{n_k} \sum_{j=0}^{n_k-1} \int \phi \circ g^j \,d\mu
$
for some subsequence $(n_k)_k$. Therefore, one can only expect $g_*(\cE_\phi) \subset \cE_\phi$ in the very
special case that $g_*$ preserves the level set $\{ \eta \in \cM_f(M) \colon \int \phi \, d\eta= \int \phi \, d\mu\}$.
This is true in the case that $\phi$ is constant, when the equilibrium states are maximal entropy measures.
\end{remark}

\subsection{Proof of Proposition~\ref{prop:rigiditydim2}}

Assume that $f_A : \mathbb T^2  \to \mathbb T^2$ is a linear Anosov diffeomorphism and $g\in \mathcal Z^0(f_A)$ has positive entropy. We proof makes use of the crucial fact: given the hyperbolic automorphism $f_A$ on $\mathbb T^2$ every 
element in the $C^0$-centralizer of $f_A$ that fixes the origin is linear \cite[page 100]{PY89b}. 
Therefore, for any $g\in \mathcal Z^0(f_A)$ there exists $k\in \mathbb N$ and $B\in GL(2, \mathbb Z)$ with $|\det B|=1$
so that $g^k=f_B$. 
As $f_B$ is $C^\infty$ then Pesin formula implies 
$
h_{top}(f_B)= \sum_i \lambda_i^+(f_B)
 $ 
where $\lambda_i^+(f_B)$ denote the positive Lyapunov exponents of $f_B$ with respect to Lebesgue.
Thus $h_{top}(f_B)= k \, h_{top}(g)>0$ if and only if  there exists at least one eigenvalue of $B$ with absolute value larger than  one. Since $f_B$ is volume preserving the later implies that $B\in SL(2,\mathbb Z)$ is a hyperbolic
matrix and, equivalently, $f_B$ is an Anosov automorphism.
This proves the proposition.

\begin{remark}
The centralizer of linear Anosov diffeomorphisms on $\mathbb T^2$ can indeed contain roots
(e.g. $f_B \in Z(f_A)$ where $A, B \in SL(2,\mathbb Z)$ are given by
$$
A = 
\left(
\begin{array}{cc}
2 & 1 \\
1 & 1 
\end{array}
\right)
	\quad\text{and}\quad
B = 
\left(
\begin{array}{cc}
1 & 1 \\
1 & 0 
\end{array}
\right)
$$
and verify $B^2=A$). Nevertheless, the existence of roots in the centralizer is rare for more regular diffeomorphisms. 
Indeed, for every $k\in \mathbb N \cup\{\infty\}\cup\{\omega\}$ there exists an $C^k$-open and 
dense set of positive entropy real analytic diffeomorphisms on surfaces that have trivial centralizer
\cite{Ro94}.
\end{remark}

\section{Further questions:}

Given a $C^r$-diffeomorphism $f\in \text{Diff}^r(M)$ and $0\le k \le r$, the centralizer $Z^k(f)$ is a 
subgroup of $\text{Diff}^k(M)$. In the case there exists
an open subset $\cU \subset \text{Diff}^{\,r}(M)$ so that $Z^k(f)$ is finitely generated for every $f\in \cU$
it makes sense to study the continuity points of the map 
$
\cU \ni f \mapsto N^k(f)
$
 where $N^k(f) \in \mathbb N \cup\{\infty\}$ denotes the minimum number of generators for $Z^k_0(f)$.  
Some results can be deduced in the case $r=k=1$.
Since $C^1$-generically in $\text{Diff}^{\,1}(M)$ the centralizer is trivial (cf. \cite{BCW})  there exists
a residual subset $\mathcal R \subset \text{Diff}^{\,1}(M)$ so that  $\mathcal R \ni f \mapsto N^1(f)$ is constant 
and equal to one.
By \cite{BF}, there exists an open subset of $C^1$-Anosov diffeomorphisms with trivial centralizer, hence 
formed by continuity points for the function $f \mapsto N^1(f)$. 
The following questions remain open:

\medskip
\noindent \emph{1. Given $r\ge 1$, the set of $C^r$-Anosov diffeomorphisms (resp. $C^r$ Axiom A diffeomorphisms 
with the no cycles condition) that have trivial centralizer forms a $C^r$ open and dense set on the space of 
the space of $C^r$-Anosov diffeomorphisms (resp. $C^r$ Axiom A diffeomorphisms with the no cycles condition)?}

\medskip

\noindent \emph{2. If $1\le k \le r$, what are the continuity points of the functions $N^k$? Are all Anosov diffeomorphisms
points of (semi)continuity? Are there Anosov diffeomorphisms whose centralizers are not finitely generated?}
\medskip

Actually not all Anosov diffeomorphisms are continuity points of this function, even if we restrict to linear hyperbolic
diffeomorphisms.  It is definitely interesting to discuss the triviality of the centralizer with the set of entropies associated to each element  of the centralizer. Indeed, observe that the triviality of the centralizer $Z^0(f)$ seldom implies that
\begin{equation}\label{eq:entropyset}
H_f :=\{ \htop(g) : g \in \mathcal Z^0(f)\}
\end{equation}
coincides with the arithmetic progression $\{ n \htop(f) : n\in \mathbb N_0\}$. However, it is not true that this condition 
implies on the triviality of the centralizer, as it can be easily observed in the examples constructed by the first author
in~\cite{Ro05}. 
The following question is suggested by this class of examples:
\medskip

\noindent \emph{3. Assume that $f : \mathbb T^n \to \mathbb T^n$ is a $C^1$ Anosov diffeomorphism 
and that $H_f=\{ n \htop(f) : n\in \mathbb N_0\}$. Is it true that all elements in the centralizer are either
of the form $f^k$, $k\in \mathbb Z$, or roots of the identity?}
\medskip

Finally, we believe that positive entropy elements in the centralizer of an Anosov diffeomorphism
should have some rigidity condition. So, we ask:

\medskip
\noindent \emph{4. Let $f$ be an Anosov diffeomorphism on $\mathbb T^n$ $(n\ge 2$). Are all positive entropy elements in the $C^1$-centralizer of $f$ partially hyperbolic?} 
\medskip

\medskip
\subsection*{Acknowledgements} 
The first author was partially supported by by CMUP \- (UID/MAT/ 00144/2013), which is funded by FCT (Portugal) with national (MEC) and European structural funds through the programs FEDER, under the partnership agreement PT2020
and by PTDC/MAT-CAL/3884/2014.
The second author was partially supported by BREUDS and CNPq-Brazil. This work was developed during the visit 
of the second author to University of Porto, whose hospitality and research conditions are greatly acknowledged. 
The authors are grateful to W. Bonomo and P. Teixeira for useful comments.


\begin{thebibliography}{00}


\bibitem{And} 
B. Anderson.
\emph{Diffeomorphisms with discrete centralizer}.
Topology, 15, (1976),143--147.

\bibitem{BF} 
L. Bakker and T. Fisher.
\emph{Open sets of diffeomorphisms with trivial centralizer in the $C^1$-topology}
Nonlinearity, 27, (2014), 2859--2885.

\bibitem{BTV} M. Bessa, M. J. Torres and P. Varandas,
\newblock \emph{On the periodic orbits, shadowing and strong transitivity of continuous flows},
\newblock  Preprint 2016.

\bibitem{BoVa}
 T. Bomfim and P. Varandas, 
 \newblock  \emph{The orbit gluying property and large deviation principles for semiflows}, 
 \newblock Preprint ArXiv:1507.03905

\bibitem{BoToVa}
T. Bomfim, M. J. Torres and P. Varandas, 
\newblock \emph{Topological features of flows with the reparametrized gluing orbit property,} 
\newblock Preprint Researchgate 2016.

\bibitem{Bo75}
R.~Bowen.
\newblock {\em Equilibrium states and the ergodic theory of {A}nosov
  diffeomorphisms}, volume 470 of {\em Lect. Notes in Math.}
\newblock Springer Verlag, 1975.

\bibitem{BDP}
C. Bonatti, L. D\'az and E. Pujals
\newblock $C^1$-generic dichotomy for diffeomorphisms: Weak forms of
hyperbolicity or infinitely many sinks or sources.
\newblock Annals of Math. 158 (2003), 355--418.

\bibitem{BCW} C. Bonatti, S. Crovisier and A. Wilkinson. 
\emph{The $C^1$ generic diffeomorphism has trivial centralizer},
Publ Math. IHES, 109:1 (2009) 185--244.

\bibitem{BoRoVa} 
W. Bonomo, J. Rocha and P. Varandas.
\emph{The centralizer of expansive flows and $\mathbb R^d$-actions}
Preprint ArXiv:1604.06516

\bibitem{Franks}
J. Franks, 
\emph{Necessary conditions for stability of diffeomorphisms}, 
Trans. Amer. Math. Soc., 158:2 (1971) 301--308.

\bibitem{Fi08}
T. Fisher, 
\emph{Trivial centralizers for Axiom A diffeomorphisms}, 
Nonlinearity 21 (2008) 2505--2517.

\bibitem{Fi09}
T. Fisher, 
\emph{Trivial centralizers for codimension-one attractors}, 
Bull. Lond. Math. Soc., 41:1 (2009)  51--56.


\bibitem{Herman}
M. Herman, 
\emph{Sur la conjugaison diff\'erentiable des diff\'eomorphismes du cercle \`a des rotations}
\newblock  Publ. Math. IHES, 49, (1979), 5--233.

\bibitem{Ko70}
N. Kopell. 
\emph{Commuting diffeomorphisms}, 
Global analysis, Proceedings of Symposia in Pure Mathematics
XIV, Berkeley, CA, 1968 (Providence, RI, 1970) 165--184.

\bibitem{LLS}
C. Liang, G. Liu, and W. Sun
\newblock  Equivalent Conditions of Dominated Splittings for Volume-Preserving Diffeomorphism. 
\newblock Acta Math. Sinica 23 (2007), 1563--1576.

\bibitem{JH91}
D. Jungreis, M. Hirsch. 
\emph{Rigidity of centralizers of Anosov flows.} 
Int. J. Math. 2:1, (1991) 37--40.

\bibitem{Katok}
A. Katok. 
\emph{ Hyperbolic measures and commuting maps in low dimension.} 
Discrete Contin. Dynam. Systems 2 (1996), no. 3, 397--411.

\bibitem{Man74}
A. Manning, 
\emph{There are no new {A}nosov diffeomorphisms on tori,}
\newblock  Amer. J. Math. 96 (1974), 422--429.


\bibitem{Palis}
J. Palis, 
\emph{Vector fields generate few diffeomorphisms,}
Bull. Amer. Math. Soc. 80:3 (1974), 503--505.

\bibitem{PY89}
J. Palis and J.-C. Yoccoz.
\emph{Rigidity of centralizers of diffeomorphisms.} 
Ann. Sci. \'Ecole Norm. Sup. (4) 22 (1989) 81--98.

\bibitem{PY89b}
J. Palis and J.-C. Yoccoz.
\emph{Centralizers of Anosov diffeomorphisms on tori.} 
Ann. Sci. \'Ecole Norm. Sup. (4) 22 (1989)  no. 1, 99â€--108.

\bibitem{Ro93} 
J. Rocha. 
\emph{Rigidity of centralizers of real analytic diffeomorphisms.} 
\newblock  Ergodic Theory Dynam. Systems 13 (1993), no. 1, 175--197.

\bibitem{Ro94} 
J. Rocha,
\emph{Centralizers and Entropy.} 
Boletim da Sociedade Brasileira de Matem\'atica,  25:2 (1994) 213--222.

\bibitem{Ro05} 
J. Rocha,
\emph{A note on the $C^0$-centralizer of an open class of bidimensional Anosov diffeomorphisms.} 
Aequationes Math. 76 (2008) 105--111.

\bibitem{RV1} 
J. Rocha and P. Varandas,
\emph{On sensitivity to initial conditions and uniqueness of conjugacies for structurally stable diffeomorphisms}
Preprint 2016.

\bibitem{S96} M. Shereshevsky. 
On continuous actions commuting with actions of positive entropy. 
\emph{Colloquium Mathematicum}, Vol. LXX (1996), Fasc. 2.


\bibitem{Shub} M. Shub, \textit{Global stability of dynamical systems}, Springer Verlag, (1987).


\bibitem{Sm}
S. Smale. 
\emph{Mathematical problems for the next century.} 
Math. Intelligencer 20 (1998), 7--15.

\bibitem{ST}
W. Sun and X. Tian
\newblock \emph{Diffeomorphisms with various $C^1$-stable properties.}
\newblock Acta Mathematica Scientia 32B(2) (2012) 552--558.

\bibitem{Wa70} 
P. Walters.
\emph{Homeomorphisms with discrete centralizers and ergodic properties}, 
Math. System Theory, Vol. 4, No. 4, p. 322--326, 1970.

\bibitem{Wa}
\newblock P.~Walters.
\newblock \emph{An introduction to ergodic theory.}
\newblock Springer-Verlag, 1975.

\end{thebibliography}
\end{document}